\newtheorem{theorem}{Theorem}[section]
\theoremstyle{plain}
\newtheorem{conjecture}[theorem]{Conjecture}
\newtheorem{lemma}[theorem]{Lemma}
\newtheorem{corollary}[theorem]{Corollary}
\newtheorem{question}[theorem]{Question}
\newtheorem{problem}[theorem]{Problem}
\theoremstyle{definition}
\newtheorem{example}[theorem]{Example}
\newcommand\R{\mathord{\mathbb R}}
\newcommand\C{\mathord{\mathbb C}}
\newcommand\Z{\mathord{\mathbb Z}}
\renewcommand{\c}{\mathbf{c}\mathnormal}
\newcommand{\f}{\mathbf{f}}
\newcommand{\w}{\mathbf{w}}
\newcommand{\x}{\mathbf{x}}
\newcommand{\y}{\mathbf{y}}
\newcommand{\z}{\mathbf{z}}
\newcommand{\0}{\mathbf{0}}
\newcommand{\1}{\mathbf{1}}
\newcommand\bd{{\bf d}\mathnormal}
\newcommand\cP{{\cal P}}
\def\diag{\mathop{{\rm diag}}\nolimits}
\newcommand{\trans}{^\top}
\newcommand{\opt}{\mathop{\mathrm{opt}}\nolimits}
\newcommand{\gam}{\boldsymbol{\gamma}}
\newcommand{\omeg}{\mbox{\boldmath{$\omega$}}}
\newcommand{\adj}{\mathrm{adj\;}}
\newcommand{\rank}{\mathrm{rank\;}}
\newcommand{\set}[1]{\{#1\}}
\begin{document}

\title{On 1-sum flows in undirected graphs}
\author[1,4]{S.~Akbari}
 \author[2]{S.~Friedland\thanks{The work of S. Friedland was supported by the NSF grant DMS-1216393.}}
\author[3]{K.~Markstr\"{o}m}
\author[4]{S.~Zare}
\affil[1]{Department of Mathematical Sciences, Sharif University of Technology, 11155-9415,
Tehran, Iran. E-mail:\texttt{s\_akbari@sharif.edu}}
\affil[2]{Department of Mathematics, Statistics and Computer Science,
 University of Illinois at Chicago, Chicago Illinois 60607-7045, USA. E-mail:\texttt{friedlan@uic.edu}}
\affil[3]{Department of Mathematics and Mathematical Statistics, Ume\aa\ University, SE-901 87, Ume\aa, Sweden.
 E-mail:\texttt{Klas.Markstrom@math.umu.se}}
\affil[4]{School of Mathematics, Institute for Research in Fundamental Sciences (IPM), 19395-5746, Tehran, Iran. E-mail: \texttt{sa\_zare\_f@yahoo.com} }

\maketitle

\date{}

\begin{abstract} 
Let $G=(V, E)$ be a simple  undirected graph. For a given set $L\subset \mathbb{R}$, a function $\omega: E \longrightarrow L$  is called an $L$-flow.
Given a vector $\gamma \in \mathbb{R}^V,$ we say that $\omega$ is a $\gamma$-$L$-flow if for each $v\in V$, the sum of the values on the edges 
incident to $v$ is $\gamma(v)$. If $\gamma(v)=c$, for all $v\in V$, then the $\gamma$-$L$-flow is called a $c$-sum $L$-flow.  In this paper we study 
the existence of $\gamma$-$L$-flows for various choices of sets $L$ of real numbers, with an emphasis on 1-sum flows.

Given  a natural $k$ number, a {\it $c$-sum $k$-flow} is a $c$-sum flow with values from the set \mbox{$\{\pm 1,\ldots ,\pm(k-1)\}$}. Let $L$ be a subset of real numbers containing $0$ and denote $L^*:=L\setminus \{0\}$. Answering a question from \cite{akb}  we characterize which bipartite graphs admit a $1$-sum $\mathbb{R}^*$-flow or a $1$-sum $\mathbb{Z}^*$-flow.  We also show that that every $k$-regular graph, with $k$ either odd or congruent to 2 modulo 4, admits a $1$-sum $\{-1, 0, 1\}$-flow.

\end{abstract} 

 \noindent \emph{Keywords}:  $L$-flow,  $\gamma$-$L$-flow, c-sum flow, bipartite graph.

 \noindent {\bf 2010 Mathematics Subject Classification}: 0521, 90C05.

 \section{Introduction}
Let $G=(V,E)$ be a simple undirected graph with $n=|V|$ vertices and $m=|E|$ edges. We say that a vertex $v\in V$ and an edge $e\in E$ are 
incident if $e=\{v,u\}$. Assign a weight $\omeg:E\to \R$.  In this paper we view $\omeg$ as a flow in $G$.
The value of $\omeg$ at $v\in V$, denoted as $\gamma(v)$, is given by $\gamma(v)=\sum_{e\in E,v\in e} \omega(e)$.
By abuse of notation we view $\omeg=(\omega(e))_{e\in E}, \gam=(\gamma(v))_{v\in V}$ as column vectors in $\R^E,\R^V$, 
respectively.
For a given set $L\subset \R$, $\omeg$ is called an $L$-flow if $\omeg:E\to L$.  Thus an $\R$-flow is just a flow defined above.
Given a vector $\gam\in \R^{V}$ we say that $\omeg$ is a $\gam$-$L$-flow if the value of an $L$-flow $\omeg$ on each vertex $v$
is $\gam(v)$.   Let $c\in\R$.  Then $\gam$-$L$-flow is called a $c$-sum $L$-flow if $\gam(v)=c$ for all $v\in V$ and $\omeg:E\to L$.

In this paper we study the existence problem of $\gam$-$L$-flow on undirected graphs.   
The problem of finding $c$-sum $S$-flows was studied in the papers \cite{akb1,akb2, akb3,akb}.
For simplicity of exposition we will assume that $G$ is a connected graph. 

The existence of $\gamma$-$\R$-flow is a linear algebra problem.   Let $L\subset \R$ be an interval.  (It may be open, closed, half open, finite or infinite.)
Then the existence of $\gamma$-L-flow is a problem linear programming related to graphs.  See for example \cite{CCPS}.
%

\section{Existence of $\gam$-interval-flows}
Given a value $\gamma$ and an interval $L$ the most basic is whether a graph $G$ has a $\gam$-$L$-flow or not.  If $L$ is the entire real line this 
a purely linear algebraic question, and when $L$ is a proper subinterval of reals we can apply methods from linear programming to find conditions for its solvability.  In this section we will first strengthen an existence result from \cite{akb} for $\gam$-$\R$-flows and then look at the case when $L$ is a proper subinterval.

\subsection{Existence of $\gam$-$\R$-flows}

Let $G=(V,E)$ be a simple undirected graph. Let $A(G):=A=[a_{ve}]\in\R^{V\times E}$ be the vertex edge incidence matrix of $G$. 
That is $a_{ve}=1$ if $v\in e$ and $a_{ve}=0$ otherwise.   It is well known that $A$ is unimodular, i.e. all its minors have values in the set $\{-1,0,1\}$,
if and only if $G$ is bipartite \cite{Ege}.  (See \cite[\S6.5]{CCPS} for a textbook reference.)  Assume that $G$ is connected.  
Then $\rank A=n$ if $G$ contains an odd cycle and $\rank A=n-1$ if $G$ is bipartite
\cite[p. 63]{CRS}.  The following result is a more detailed version of the result proved in \cite{akb}.  
\begin{lemma}\label{existgamRflow}  
	Let $G=(V,E)$ be a connected graph and  $\gamma\in \R^v$ is given.  Then
	\begin{enumerate}
		\item If $G$ is not bipartite then there exists a $\gam$-$\R$-flow. Furthermore, if $\gam\in \Z^V$ then there exists 
		a solution $\omeg$ such that $2\omeg\in\Z^E$.

		\item Assume that $G$ is bipartite and $V=V_1\cup V_2$ is the bipartite decomposition of vertices of $G$.
			Then there exists a $\gam$-$\R$-flow if and only if
			\begin{equation}\label{bipgamcond}
				\sum_{v\in V_1}\gamma(v) -\sum_{v\in V_2}\gamma(v)=0.
			\end{equation}
			Equivalently, let $\y=(y_v)_{v\in V}\in \R^V$ be a vector such that $y_v=1$ if $v\in V_1$
			and $y_v=-1$ if $v\in V_2$.  That is, $\y\trans=(\1_{V_1}\trans, -\1_{V_2}\trans)$.  Then $\y$ is a basis of the null
			space of $A(G)\trans$.  Furthermore, if $\gam\in\Z^V$ and the condition \eqref{bipgamcond} holds then there exists a solution $\omeg\in\Z^E$.
	\end{enumerate}
\end{lemma}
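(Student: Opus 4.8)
The plan is to recognize that a $\gam$-$\R$-flow is exactly a solution $\omeg\in\R^E$ of the linear system $A\omeg=\gam$, where $A=A(G)$ is the vertex--edge incidence matrix; indeed $(A\omeg)_v=\sum_{e\ni v}\omega(e)=\gamma(v)$ by definition. So both parts reduce to understanding the range of $A$ (for solvability) together with a suitable integer submatrix (for the integrality refinements), and I would organize the proof around the rank dichotomy already quoted: $\rank A=n$ when $G$ has an odd cycle and $\rank A=n-1$ when $G$ is bipartite.

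For part (1), since $G$ is connected and non-bipartite we have $\rank A=n$, so the linear map $\omeg\mapsto A\omeg$ is onto $\R^V$ and a $\gam$-$\R$-flow exists for every $\gam$. For the half-integrality, I would pass to a spanning subgraph $H$ consisting of a spanning tree together with one extra edge closing an odd cycle; such an edge exists precisely because the tree's $2$-colouring fails to properly colour the non-bipartite $G$. The incidence matrix $A_H$ of this odd-unicyclic spanning subgraph is an $n\times n$ integer matrix with $|\det A_H|=2$ (a leaf-peeling reduction collapses the tree and leaves the incidence determinant of an odd cycle, which is $\pm2$). Solving $A_H\omeg_H=\gam$ by Cramer's rule gives $\omeg_H=(\det A_H)^{-1}\,\adj(A_H)\,\gam$, whose entries lie in $\tfrac12\Z$ when $\gam\in\Z^V$; extending $\omeg_H$ by zero on the remaining edges yields a solution with $2\omeg\in\Z^E$.

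For part (2), I would first compute $\ker A\trans$. The equation $A\trans\y=\0$ says $y_u+y_v=0$ for every edge $\{u,v\}$, so the sign of $\y$ alternates along edges; connectivity and bipartiteness then force $\y$ to be constant on $V_1$ and the opposite constant on $V_2$, i.e.\ $\ker A\trans$ is spanned by $\y\trans=(\1_{V_1}\trans,-\1_{V_2}\trans)$. Since $\rank A=n-1$ gives $\dim\ker A\trans=1$, this $\y$ is a basis, as claimed. Solvability of $A\omeg=\gam$ is equivalent to $\gam\perp\ker A\trans$, i.e.\ to $\y\trans\gam=0$, which is exactly condition \eqref{bipgamcond}. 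For integrality, assuming \eqref{bipgamcond}, I would fix a spanning tree $T$, set $\omega\equiv0$ off $T$, and solve the tree system by leaf-peeling: a leaf $v$ with tree-edge $e=\{v,u\}$ forces $\omega(e)=\gamma(v)$, after which I delete $v$ and $e$ and replace $\gamma(u)$ by $\gamma(u)-\omega(e)$. Each step is an integer operation, so $\omeg\in\Z^E$; and since $\sum_{v\in V_1}\gamma(v)-\sum_{v\in V_2}\gamma(v)$ is invariant under a peeling step, condition \eqref{bipgamcond} guarantees that the single equation left at the final vertex reads $0=0$, so the process is consistent.

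The routine linear algebra---surjectivity, the kernel computation, and the perpendicularity criterion---is immediate; the only genuinely graph-theoretic input, and the step I would be most careful about, is the integrality in part (1): one must verify that an odd-unicyclic spanning subgraph exists and that its incidence determinant is exactly $\pm2$ rather than some other even number, since that is precisely what pins the denominator down to $2$. In part (2) the analogous care goes into confirming that the peeling invariant really is $\sum_{v\in V_1}\gamma(v)-\sum_{v\in V_2}\gamma(v)$, which is what makes \eqref{bipgamcond} simultaneously necessary and sufficient.
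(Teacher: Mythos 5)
Your proof is correct and, for the most part, follows the same route as the paper: both reduce the lemma to the linear system $A(G)\omeg=\gam$, use the rank dichotomy ($\rank A(G)=n$ versus $n-1$), obtain half-integrality in part (1) from an odd-unicyclic spanning subgraph whose incidence matrix has determinant $\pm 2$ together with Cramer's rule, and settle part (2) via the null-space vector $\y\trans=(\1_{V_1}\trans,-\1_{V_2}\trans)$ and the orthogonality criterion. Two local differences are worth recording. In part (1) the paper does not take the odd-cycle determinant on faith: it first proves $\det A(C)=2$ for an odd cycle $C$ by writing $A(C)=P+P\trans$ with $P$ a cyclic permutation matrix and evaluating $\det(I+P)=\prod_{i=1}^n(1+\zeta_i)=2$ over the $n$-th roots of unity, and only then performs exactly your leaf-peeling reduction of the general tree-plus-one-edge subgraph to the cycle; since you yourself single out this determinant as the crux (it is what pins the denominator to $2$ rather than some other even number), you should include that short computation or an explicit reference, while your construction of the subgraph via a monochromatic non-tree edge is a harmless variant of the paper's (which deletes an edge of a given odd cycle, extends the resulting path to a spanning tree, and restores the edge). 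In part (2) your integrality argument is genuinely different: the paper deletes one row of $A(T')$ for a spanning tree $T'$, notes that the resulting square matrix $B$ has $\det B=\pm 1$ by unimodularity of bipartite incidence matrices, and applies Cramer's rule, whereas you peel leaves of $T'$, forcing integer edge weights one step at a time, with the invariance of $\sum_{v\in V_1}\gamma(v)-\sum_{v\in V_2}\gamma(v)$ under each peeling step certifying that the final residual equation reads $0=0$. Your version is more elementary (no appeal to unimodularity) and fully constructive, and the invariant transparently re-derives the necessity of \eqref{bipgamcond} in the integer setting; the paper's version is shorter and fits the unimodularity machinery it reuses elsewhere. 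Both mechanisms are sound.
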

\begin{proof}  
	Recall that the existence of $\gam$-$\R$-flow is equivalent to the solvability of the system:
	\begin{equation}\label{sysomeggam}
		A(G)\omeg = \gam.
	\end{equation}
	\begin{enumerate}
	
	\item  $G$ is not bipartite if and only if it contains an odd cycle.  So $\rank A(G)=|V|=n$.  Hence \eqref{sysomeggam} is solvable.
		We now show that if $\gam\in\Z^V$ then there exists a solution $\omeg$ such that $2\omeg\in \Z^V$.
		Since $G$ is not bipartite it contains an odd cycle $C$.
		
		First, assume  that $C$ is a Hamiltonian cycle.  We can assume that $E(C)=\{\{v_1,v_2\},\ldots \{v_{n-1},v_n\}, \{v_n.v_1\}\}$. We claim that 
		$\det A(C)=2$.  Indeed, $A(C)=P+P\trans$, where $P$ is the permutation matrix corresponding to an odd cycle $v_1\to v_2\to\ldots\to  v_n\to v_1$.  
		So 
		\[\det A(C)=\det (P+P\trans)=\det (P\trans (P^2+I))=\det P\trans \det (I+P^2).\]
		Since $P$ is a cyclic matrix of an odd order $\det P=\det P\trans =1$.  Note that $P^2$ corresponds also to a cyclic matrix of order $n$.
		That is, $P^2$ is similar to $P$.  Hence $\det (I+P^2)=\det(I+P)$.  Recall that the eigenvalues of $P$ are all the $n$-th roots of $1$.
		That is, $\det (\lambda I_n -P)=\lambda^n-1$.  Let $\zeta_1,\ldots,\zeta_n$ be $n-th$ roots of $1$.  So
		\[\det (I+P)=\prod_{i=1}^n (1+\zeta_i)=1+\sum_{i=1}^n \sigma_i.\]
		Here, $\sigma_i$ is the $i-th$ elementary polynomial of $\zeta_1,\ldots,\zeta_n$ for $i=1,\ldots,n$.
		As $\zeta_1,\ldots,\zeta_n$ are the roots of $\lambda^n-1=0$, it follows that $\sigma_i=0$ for $i=1,\ldots,n-1$. 
		As $n$ is odd, $\sigma_n=1$.  Hence $\det A(C)=2$. 
		
		Next , assume  that $\omega(e)=0$ if $e\not\in E(C)$.  Let $\omeg'\in \R^{E(C)}$ be the unique solution of $A(C)\omeg'=\gam$.
		So the coordinates of $\omeg$ coincide with coordinates of $\omeg'$ on $E(C)$.
		Clearly, $2\omeg'=2A(C)^{-1}\gam$.  Recall that $A(C)^{-1}=\frac{\adj (A(C))}{\det A(C)} $.
		Here, $\adj(A(C))$ is the adjoint matrix of $A(C)$ whose entries are minors of $A$ of order $n-1$.  Since the entries of $A(C)$ are integers
		it follows that the entries of  $\adj(A(C))$ are integers.   Hence $\frac{2}{\det A(C))}\adj A(C)=\adj(C)$ and $2\omeg'=\adj(A(C))\gam$.   
		So $2\omeg'\in\Z^{E(C)}$ and $2\omeg\in \Z^E$.
		
		We now assume that $C$ is not Hamiltonian.  Let $V(C)=\{v_1,\ldots,v_l\}$ where $l$ is odd and $3\le l <n$.
		Delete the edge $\{v_{l},v_1\}$ from $C$ to obtain a path $Q$.  Extend $Q$ to a spanning tree $T'$ of $G$.
		Let $G'=(V,E(T')\cup \{v_{l},v_1\})$.  So $G'$ has exactly one odd cycle $C$.  We claim that $\det A(G')=\pm 2$.
		Observe if we delete the edges of $E(C)$ in $G'$ we obtain a forest.  Hence, $G'$ contains at least one vertex $u$ of degree $1$.
		Expand $\det A(G')$ by the row corresponding to $u$.  Then $\det A(G')=\pm \det A(G_1)$, where $G_1$ is obtained from $G'$
		by deleting the vertex $u$.  Continue this process to deduce that $\det A(G')=\pm \det A(C)=\pm 2$.
		
		Let $\omeg$ be the unique solution of \eqref{sysomeggam} where $\omega(e)=0$ if $e\not\in E(G')$.  The above arguments show that
		$2\omeg\in \Z^E$.

	\item Assume that $G$ is bipartite and $V=V_1\cup V_2$ is the bipartite decomposition of $V$.
		Clearly, $\y\trans A(G)=0$.  Since $\rank A(G)=n-1$ then $\y$ spans the null space of $A(G)\trans$.  Hence the system 
		\eqref{sysomeggam} is solvable if and only if the condition \eqref{bipgamcond} holds.
		
		Let $\gam\in\Z^V$ and assume that the condition \eqref{bipgamcond} holds.  We now construct a solution $\omeg\in\Z^E$.
		Let $T'$ be a spanning tree of $G$.  Let $\omeg$ be the unique solution of \eqref{sysomeggam} such that $\omega(e)=0$ 
		if $e\not\in E(T')$.  Recall that $\rank A(T')=n-1$.  Since $\y$ spans the null space of $A(T')\trans$ it follows any $n-1$ rows of $A(T')$ are linearly 
		independent.  Let $B$ be a square submatrix of $A$ obtained by deleting a row in $A(T')$ corresponding to a vertex $v \in V$.  Denote by $\gamma'$ 
		the vector obtained from $\gamma$ be deleting coordinate $\gamma(v)$.    As $A(T')$ unimodular it follows that $\det B=\pm 1$.
		Hence the solution of the system $A(T')\omeg'=\gam$ is given by $\omeg=B^{-1}\gam\in\Z^{E(T')}$.  As $\omega(e)=\omega(e')$ for each
		$e\in E(T')$ we deduce that $\omega\in \Z^E$.
	
	\end{enumerate}

\end{proof}

\subsection{Linear programming conditions for the existence of $\gam$-interval-flows}\label{sec:ex}
In this section we apply linear programming methods to study the conditions for existence of $\gam$-$L$-flow, where $L$
is an interval of $\R$.  For simplicity of exposition we assume that $L$ is a closed bounded interval $[a,b]$. 
Our methods and arguments are closed to those given in \cite{CCPS}.

We denote $[n]=\{1,\ldots,n\}$.  We will identify
\[V\equiv [n] \quad E\equiv [m], \quad \R^V\equiv\R^n,\quad \R^E\equiv \R^m,\]
and no ambiguity will arise.  Let $\1_m=\1_E$ be a column vector with $m=|E|$ coordinates equal to 1.  
For two vectors $\x=(x_1,\ldots,x_n)\trans,\z=(z_1,\ldots,z_n)\trans\in \R^m$
we denote $\x\le \y$ if $x_j\le y_j$ for $j=1,\ldots,m$.

We are looking for a solution of  \eqref{sysomeggam} such that
\begin{equation}\label{1flowpmsol}
	a\1_m\le \omeg\le b\1_m.
\end{equation}
Denote by $I_m$ the identity matrix of order $m$ and by $\R_+$ the set of nonnegative real numbers.
Let $\bd(G)=(\deg(v))_{v\in V}\in\R^V$ be the degree sequence of $G$.  Note that $\bd(G)=A(G)\1_m$.
\begin{lemma}\label{Farkaslem}  
	Consider the system \eqref{sysomeggam} satisfying the conditions \eqref{1flowpmsol}. Then this system of equations is solvable 
	if and only if the following conditions are satisfied: 
	For each $\w\in\R_+^m$ one has the inequality 
	\begin{equation}\label{Farkaslemma1}
		\max\{(\gam-a\bd(G))\trans \z,\; \z \in \R^n,  A\trans\z\le \w\}\le (b-a) \1_m\trans \w.
	\end{equation} 
\end{lemma}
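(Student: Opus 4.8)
The plan is to reduce the question to a bounded-variable linear feasibility problem and then apply a Farkas-type theorem of alternatives, tracking signs carefully so as to land on exactly \eqref{Farkaslemma1}.

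First I would eliminate the lower bound by the affine change of variable $\x=\omeg-a\1_m$. Since $\bd(G)=A(G)\1_m$, solving \eqref{sysomeggam} subject to \eqref{1flowpmsol} is equivalent to solving
\[
  A\x=\gam-a\bd(G),\qquad \0\le\x\le (b-a)\1_m .
\]
So the claim becomes a criterion for when $\gam-a\bd(G)$ lies in the image under $A$ of the box $\{\x:\0\le\x\le(b-a)\1_m\}$.

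Next I would cast this as the single polyhedral system $M\x\le\mathbf{q}$ obtained by writing the equality $A\x=\gam-a\bd(G)$ as the pair $A\x\le\gam-a\bd(G)$ and $-A\x\le-(\gam-a\bd(G))$, the upper bound as $\x\le(b-a)\1_m$, and the lower bound as $-\x\le\0$. By Farkas' lemma (Gale's theorem of alternatives), $M\x\le\mathbf{q}$ has no solution if and only if there is a nonnegative vector of multipliers annihilating the rows of $M$ (i.e. $M\trans$ applied to it is $\0$) while giving a strictly negative pairing with the right-hand side $\mathbf{q}$.

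The decisive step is to simplify this certificate. The two multipliers attached to the split equality combine into a single unconstrained vector, which after a sign flip I call $\z\in\R^n$; let $\w\in\R_+^m$ denote the multiplier of the upper bound. The annihilation condition then expresses the lower-bound multiplier as $\w-A\trans\z$, and its nonnegativity is exactly $A\trans\z\le\w$; the strict-negativity condition becomes $(\gam-a\bd(G))\trans\z>(b-a)\1_m\trans\w$. Hence the original system is \emph{infeasible} precisely when there exist $\z\in\R^n$ and $\w\in\R_+^m$ with $A\trans\z\le\w$ and $(\gam-a\bd(G))\trans\z>(b-a)\1_m\trans\w$. Negating this dichotomy gives solvability if and only if, for every $\w\in\R_+^m$, no admissible $\z$ exceeds the bound, which is exactly \eqref{Farkaslemma1}. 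The only real obstacle is the sign bookkeeping in passing from the four blocks of Farkas multipliers to the pair $(\z,\w)$; a small well-posedness remark finishes the identification, namely that since $\z=\0$ always satisfies $A\trans\z\le\w$ and yields value $0\le(b-a)\1_m\trans\w$, the inner optimization is over a nonempty set, so the displayed $\max$ is the correct quantity and is attained exactly when \eqref{Farkaslemma1} holds. Everything else is the routine invocation of Farkas' lemma.
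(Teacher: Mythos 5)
Your proof is correct and follows essentially the same route as the paper's: encode the bounded feasibility problem as a four-block system $F\x\le\f$, apply Farkas' lemma, and collapse the equality multipliers into a single free vector $\z$ with the upper-bound multiplier $\w$, so that nonnegativity of the eliminated block becomes $A\trans\z\le\w$. The only organizational difference is that you perform the shift $\x=\omeg-a\1_m$ up front (which the paper records separately as Lemma \ref{equivcondsolv}), whereas the paper keeps the original variables and lets the $-a\bd(G)$ term emerge from the lower-bound multiplier via $A\1_m=\bd(G)$; the certificates are identical.
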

\begin{proof}   
	Clearly, the system \eqref{sysomeggam} satisfying the conditions \eqref{1flowpmsol} is equivalent to the following conditions.
	\begin{equation}\label{ineqform1flowpm1}
		F\x\le \f, \; \x\in \R^{m}, \textrm{ where } F=\left[\begin{array}{r}A\\-A\\I_m\\-I_m\end{array}\right],\;
		\f= \left[\begin{array}{r}\gam\\-\gam\\b\1_m\\-a\1_m\end{array}\right].
	\end{equation}
	Farkas lemma claims \cite{CCPS} that the above system is solvable if and only if the following implication holds: 
	\begin{equation}\label{Farkascond1}
		\y\in\R_+^{2(n+m)} \textrm{ and } \y\trans F=\0\trans  \Rightarrow \y\trans \f\ge 0, 
	\end{equation}
	where $\y\trans=(\y_1\trans,\y_2\trans, \y_3\trans,\y_4\trans),\y_1,\y_2\in\R^n, \y_3,\y_4\in\R^m$.
	The equation $\y\trans F=\0\trans$ is equivalent to 
	\begin{equation}\label{Farkascond2}
		\y_4=\y_3-A\trans \z, \quad \z=\y_2-\y_1.
	\end{equation}
	The condition $\y\ge \0$ is equivalent to the inequalities 
	\begin{equation}\label{Farkascond3}
		\y_3\ge \0, \quad \y_3\ge A\trans \z.
	\end{equation}
	(Note that if the above conditions hold, one can always choose $\y_1,\y_2\ge \0$ such that $\z=\y_2-\y_1$.)
	Clearly, these conditions are satisfiable for $\y_3\ge 0$ and $\z=0$.
	Finally, the condition $ \y\trans \f\ge 0$ is equivalent to the following the inequality
	\[  \z\trans \gam-a\z\trans A\1_m\le (b-a)\y_3\trans \1_m.\]
	Set $\w=\y_3$ and recall that $A\1_m=\bd(G)$ to deduce the lemma.
\end{proof}

The condition \eqref{Farkaslemma1} can be stated as the following nonlinear inequality in $\z\in\R^n$. Let $\z=(z_1,\ldots,z_n)\trans$ be an 
arbitrary vector in $\R^n$.  Define $\w(\z)=(w_1(\z),\ldots,w_m(\z))\trans\in \R+^m$ as follows:

\begin{equation}\label{defw}
	w_j(\z)=\max(0, (A\trans \z)_j) \textrm{ for } j=1,\ldots,m.
\end{equation} 
Then the condition \eqref{Farkaslemma1} is equivalent to
\begin{equation}\label{Farkaslemma2}
	(\gam-a\bd(G))\trans \z\le (b-a) \1_m\trans\w(\z) \textrm{ for each } \z\in\R^n.
\end{equation}

We state an equivalent necessary and sufficient condition for solvability of the system \eqref{sysomeggam} satisfying the conditions \eqref{1flowpmsol}
which can be stated in terms of nonnegative solutions of a corresponding variant of \eqref{sysomeggam}.
\begin{lemma}\label{equivcondsolv}  
	The following are equivalent:
	\begin{enumerate}
		\item The system \eqref{sysomeggam} satisfying the conditions \eqref{1flowpmsol} is solvable.
		\item The system 
			\begin{equation}\label{equivcondsolv1}
				A\omeg'=\gam- a\bd(G), \quad \0\le \omeg'\le (b-a)\1_m
			\end{equation}
			is solvable.
	\end{enumerate} 
\end{lemma}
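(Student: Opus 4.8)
The plan is to prove the two conditions equivalent by exhibiting an explicit affine bijection between their feasible sets. The key observation is that both systems ask for a point in a translated box satisfying a linear equation, and the two boxes differ only by the shift $a\1_m$. Concretely, I would set
\[
	\omeg' = \omeg - a\1_m,
\]
and show that $\omeg$ is feasible for the first system if and only if $\omeg'$ is feasible for the second.

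First I would handle the linear equation. Using the identity $\bd(G)=A\1_m$ recorded just before the lemma, I get $A\omeg' = A\omeg - aA\1_m = A\omeg - a\bd(G)$. Hence $A\omeg=\gam$ holds if and only if $A\omeg'=\gam-a\bd(G)$, which is exactly the equation in \eqref{equivcondsolv1}. Next I would translate the box constraints: subtracting $a\1_m$ from every term of $a\1_m\le\omeg\le b\1_m$ turns it into $\0\le\omeg'\le (b-a)\1_m$, again matching \eqref{equivcondsolv1}. Since the map $\omeg\mapsto\omeg-a\1_m$ is invertible with inverse $\omeg'\mapsto\omeg'+a\1_m$, both implications follow at once, and the equivalence is established.

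There is no genuine obstacle in this argument; it is purely a change of variables, and the only points requiring care are bookkeeping ones, namely keeping the sense of the inequalities correct under the shift and invoking $\bd(G)=A\1_m$ to rewrite the right-hand side. The real content of this reformulation lies not in the proof but in its purpose: it recasts the box-constrained feasibility problem into the nonnegative form $\0\le\omeg'$, which is the standard shape needed to apply network-flow and Farkas-type arguments in the sequel.
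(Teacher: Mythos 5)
Your proof is correct and follows exactly the paper's own argument: the change of variables $\omeg'=\omeg-a\1_m$, together with the identity $\bd(G)=A\1_m$, is precisely what the paper uses. Your version merely spells out the details the paper leaves as ``straightforward.''
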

\begin{proof} 
	The proof is straightforward by noting that $\omega$ is a solution satisfying \eqref{sysomeggam}-\eqref{1flowpmsol} if and only if 
	$\omeg'=\omeg-a\1_m$ satisfies \eqref{equivcondsolv1}.
\end{proof}

We now give the condition for the existence of nonnegative solutions of \eqref{sysomeggam}.
\begin{lemma}\label{nonnegsol}
	Consider the system \eqref{sysomeggam} with $\gam\ne \0$.  Then this system has a nonnegative solution if and only if
	\begin{equation}\label{nonnegso1}
		\min\{ \gam\trans \z, \;\z\in\R^n, A\trans\z\ge \0\}=0
	\end{equation}
\end{lemma}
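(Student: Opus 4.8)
The plan is to recognize \eqref{nonnegso1} as a restatement of Farkas' lemma, which is already invoked above via \cite{CCPS}. First I would record the trivial but essential observation that $\z=\0$ is always feasible for the minimization in \eqref{nonnegso1}, since $A\trans\0=\0\ge\0$, and that it yields objective value $\gam\trans\0=0$. Consequently the infimum appearing in \eqref{nonnegso1} is at most $0$, so the asserted equality ``$\min=0$'' is really the twofold statement that the linear functional $\gam\trans\z$ is nonnegative on the feasible region and that this lower bound is attained (namely at $\z=\0$). This makes the use of $\min$ well posed.

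Next I would exploit the conical structure of the feasible set $\set{\z:A\trans\z\ge\0}$. This set is a cone: if $A\trans\z\ge\0$ then $A\trans(t\z)\ge\0$ for every $t\ge 0$. Hence, were there a feasible vector $\z_0$ with $\gam\trans\z_0<0$, moving out along the ray $t\z_0$ would give $\gam\trans(t\z_0)=t\,\gam\trans\z_0\to-\infty$ as $t\to\infty$, so the infimum would be $-\infty$ rather than $0$. This homogeneity argument shows that \eqref{nonnegso1} holds if and only if
\[
	A\trans\z\ge\0 \ \Longrightarrow\ \gam\trans\z\ge 0 ,
\]
that is, if and only if $\gam\trans$ is nonnegative on the entire polyhedral cone $\set{\z:A\trans\z\ge\0}$.

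Finally I would apply Farkas' lemma in the form already cited from \cite{CCPS}. Solvability of $A\omeg=\gam$ with $\omeg\ge\0$ is, by the standard alternative, equivalent to the nonexistence of a vector $\y$ with $A\trans\y\le\0$ and $\gam\trans\y>0$; substituting $\y\mapsto-\z$ rewrites this precisely as the implication that every $\z$ with $A\trans\z\ge\0$ satisfies $\gam\trans\z\ge 0$. Combining this with the previous paragraph yields the desired equivalence between the solvability of $A\omeg=\gam$, $\omeg\ge\0$, and condition \eqref{nonnegso1}.

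The part requiring the most care is the second step rather than the invocation of Farkas: one must argue that ``$\min=0$'' is genuinely equivalent to nonnegativity of $\gam\trans$ on the feasible cone, and this hinges on the cone being homogeneous, so that a single negative value of the objective forces unboundedness below. I would also note that the hypothesis $\gam\ne\0$ is not actually needed for the equivalence (when $\gam=\0$ both sides hold trivially, since $\omeg=\0$ solves the system and $\gam\trans\z\equiv 0$), but I would retain it to match the setting of the intended application.
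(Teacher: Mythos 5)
Your proposal is correct and takes essentially the same route as the paper: both proofs reduce the lemma to Farkas' lemma and use the feasibility of $\z=\0$ to convert the implication $A\trans\z\ge \0 \Rightarrow \gam\trans\z\ge 0$ into the equality $\min=0$. The only differences are cosmetic---the paper applies Farkas to the inequality system $F\x\le\f$ with $F$ built from $A$, $-A$, $-I_m$ and then decomposes the dual variable $\y=(\y_1,\y_2,\y_3)$ to recover $\z=\y_1-\y_2$, whereas you invoke the standard equality-form alternative directly, and your explicit homogeneity argument for why a single negative objective value rules out ``$\min=0$'' spells out a step the paper leaves implicit.
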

\begin{proof} 
	As in the proof of Lemma \ref{Farkaslem} the existence of nonnegative solutions of the system \eqref{sysomeggam} is equivalent to the 
	system
	\[F\x\le \f, \; \x\in \R^{m}, \textrm{ where } F=\left[\begin{array}{r}A\\-A\\-I_m\end{array}\right],\;
	\f= \left[\begin{array}{r}\gam\\-\gam\\\0\end{array}\right].\]
	The above system is solvable if and only if each nonnegative solution of $\y\trans F=\0\trans  $ satisfies the inequality $\y\trans \f\ge 0$.
	Let $\y\trans=(\y_1\trans,\y_2\trans,\y_3)$, where $\y_1,\y_2\in\R^n$ and $\y_3\in \R^m$.
	Then the condition $\y\ge \0$ and $F\trans \y=0$ are equivalent to the condition that $\y_3=A\trans \z\ge 0$, where $\z=\y_1-\y_2$.
	The condition $\y\trans \f\ge 0$ is equivalent to $\gam\trans \z\ge 0$.  Note that if we choose $\z=\0$ then $\y_3=\0$ and $\gam\z=0$.
	This implies \eqref{nonnegso1}.
\end{proof}

We now restate our results for $c$-$[a,b]$-flows.  That is, we let $\gam=c\1_n$.
\begin{theorem}\label{char1pm1flow}  Let $G=(V,E)$ be a simple undirected graph with no isolated vertices.  The following are equivalent:
	\begin{enumerate}
		\item $G$ has $c$-$[a,b]$-flow.
		\item If  $G$ has a nonnegative $\c\1_m - a\bd(G)$-flow such that the value of this flow on each edge is at most $b-a$.
		\item For each $\w\in\R_+^m$ one has the inequality 
			\[\max\{(c\1_n-a\bd(G))\trans \z,\; \z \in \R^n,  A\trans\z\le \w\}\le (b-a) \1_m\trans \w.\]
	\end{enumerate}
\end{theorem}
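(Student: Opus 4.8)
The plan is to derive this theorem directly from the three lemmas already established for general $\gam$-$[a,b]$-flows, specializing throughout to the constant target vector $\gam=c\1_n$. Under this specialization a $\gam$-$[a,b]$-flow \emph{is} by definition a $c$-$[a,b]$-flow, so statement (1) is exactly the solvability of \eqref{sysomeggam} subject to \eqref{1flowpmsol} with $\gam=c\1_n$. Thus the entire argument reduces to matching the general solvability criteria of the lemmas against statements (2) and (3) after this substitution, and no new analysis is needed.

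First I would establish the equivalence of (1) and (2) by invoking Lemma \ref{equivcondsolv}. That lemma asserts, via the explicit correspondence $\omeg'=\omeg-a\1_m$, that the system \eqref{sysomeggam}--\eqref{1flowpmsol} is solvable if and only if the system \eqref{equivcondsolv1} is solvable. Substituting $\gam=c\1_n$ turns \eqref{equivcondsolv1} into $A\omeg'=c\1_n-a\bd(G)$ with $\0\le\omeg'\le(b-a)\1_m$, which is precisely the assertion that $G$ carries a nonnegative $\bigl(c\1_n-a\bd(G)\bigr)$-flow all of whose edge values are at most $b-a$. This is statement (2).

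Next I would establish the equivalence of (1) and (3) by invoking Lemma \ref{Farkaslem}. Its solvability criterion \eqref{Farkaslemma1} is required to hold for every $\w\in\R_+^m$, and substituting $\gam=c\1_n$ reproduces verbatim the inequality displayed in statement (3). Since statement (1) is thereby equivalent to each of (2) and (3) separately, transitivity delivers the equivalence of all three, completing the argument.

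Because the content is a specialization rather than a fresh derivation, the only point requiring genuine care — and the place I expect a careful reader to pause — is the notational reconciliation: checking that the degree identity $\bd(G)=A\1_m$ is used consistently, that the subscripts on $\1$ correctly track whether a vector lives in $\R^n$ or in $\R^m$ (in particular that the target $c\1_n-a\bd(G)$ is a vertex vector), and that the nonnegativity constraint in (2) is exactly the $a=0$ shift encoded in $\omeg'=\omeg-a\1_m$. I would also remark that the standing hypothesis of no isolated vertices merely guarantees $\bd(G)$ has strictly positive entries, keeping the shifted problem nondegenerate; the three equivalences themselves, being inherited from the lemmas, hold without any use of this hypothesis.
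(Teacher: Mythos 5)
Your proposal is correct and is essentially the paper's own argument: the paper gives no separate proof of Theorem \ref{char1pm1flow}, introducing it with the words ``we let $\gam=c\1_n$,'' i.e.\ it is exactly the specialization of Lemma \ref{equivcondsolv} (giving (1)$\Leftrightarrow$(2)) and Lemma \ref{Farkaslem} (giving (1)$\Leftrightarrow$(3)) that you carry out. Your closing observations --- that the target $c\1_n-a\bd(G)$ in statement (2) is a vertex vector (the paper's $\c\1_m$ there is a misprint for $c\1_n$) and that the no-isolated-vertices hypothesis is not actually used in the equivalences --- are accurate and slightly sharper than what the paper records.
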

%

\section{The range of a 1-flow}
Once a graph has been shown to have a $\gam$-$\R$-flow it is natural to ask which values the edge weights in such a flow can take. In this 
section we will look at questions of this type for the specific case of 1-sum flows.   Given a 1-sum flow on a graph $G$ we call the smallest interval which 
contains all the edge weights of the flow is called  the range of the flow.  A natural question now is: Given a graph $G$, which is the shortest interval $L$ 
such that $L$ is the range of  a $1$-sum flow on $G$?  Starting from the other end we can also ask for a characterization of the graphs which have a 
1-sum flow with range in some given interval $L$.

We will prove some results of both these forms. First we will look at 1-sum flows on trees, which have a unique 1-sum flow or none at all, and 
find the optimal range for this class of graphs. After that we do the same for graphs with a single cycle, and then give some bounds for the range of 
1-sum flows on general graphs.  After this we instead look at conditions guaranteeing that a graph has a 1-sum $[1,1]$-flow, or a non-negative flow .

\subsection{The range of 1-sum flows on trees}
For a given graph $G=(V,E)$ and the weight function $\omega:E\to\R$, for each subset $Q$ of $E$ we denote by $\omega(Q):=\sum_{e\in Q}\omega(e)$.
We agree that $\omega(\emptyset)=0$.
In this section we analyze the the range of values of $1$-flow on a tree $T=(V,E)$ with $n$ vertices, i.e. $n=|V|$
and we let $V=[n]=\{1,\ldots,n\}$.  
Recall that $m=|E|=n-1$ and $T$ is bipartite.   Let $A=A(T)$.  Then the system \eqref{sysomeggam} is solvable if and only if the condition
\eqref{bipgamcond} holds.  Assume that \eqref{bipgamcond} holds.   
  
We now estimate the coordinates of the solution of  \eqref{sysomeggam}. We perform the following \emph{pruning} procedure of a tree $T$.  Let $T_1=T$ 
and  $P_1\subset V$ be leaves. If $T_1=K_2=K_{1,1}$ or the star $K_{1,n-1}$ then we are done.  Otherwise, let $T_2$ be the subtree of $T_1$ obtained 
by deleting the leaves $P_1$ and the corresponding $|P_1|$ edges.  Denote by $E(P_1)\subset E(T)$ the subset of edges attached to $P_1$. We now continue 
this process on $T_2$. We obtain a sequence of subtrees $T_1\supset T_2\supset\cdots\supset T_k$, where $T_k=K_{1,n_k-1}$.  The leaves of 
$T_i=(V_i,E_i), n_i=|V_i|$ are $P_i$. Then $E(P_i):=E_i\setminus E_{i+1}$ for $i=1,\ldots,k$.  ($E_{k+1}=\emptyset$.)  
Note
\begin{equation}\label{seqpendvert}
	p_1=|P_1|\ge p_2=|P_2|\ge \ldots \ge p_k=|P_k|=\max(2,n_k-1).
\end{equation}
Indeed, if we delete all leaves of $T_1$ which are neighbors of $u$, then it is possible that $u$ is not a leaf of $T_2$.
On the other hand if $u$ is a leaf in $T_2$ then $u$ is not a leaf in $T_1$ and $u$ has at least leaf neighbor in $T_1$.

We consider the system \eqref{sysomeggam}.  Let $\gam^{(1)}=(\gamma^{(1)}(v))_{v\in V_1}=\gam$ and
$\omega_1(e)=\gamma^{(1)}(v)$ for  $e\in E(P_1)$ and $v\in e$.  The values of $\omega_{1}(e)$ is the value of $\omega(e)$,  where $e$ is the unique 
edge in $T_1$ that contains the vertex $v\in P_1$.  

Let $\gam^{(i)}=(\gamma^{(i)}(v))_{v\in V_i}$ and $\omega_{i}(e), e\in E(P_i)$ be defined recursively as follows for $ i=2,\ldots,k$:
\begin{eqnarray}
&&\gamma^{(i)}(v)=\gamma^{(i-1)}(v) \textrm{ for } v\in V_i \textrm{ not connected to } P_{i-1}\notag\\
&&\gamma^{(i)}(v)=\gamma^{(i-1)}(v) - \sum_{e\in E(P_{i-1}), v\in e} \omega_{i-1}(e) \textrm{ for } v\in V_i \textrm{ connected to } P_{i-1}\notag\\
&&\omega_{i}(e)=\gamma^{(i)}(v) \textrm{ for } e\in E(P_i) \textrm{ and }  v\in P_i.
\label{defxlij}
\end{eqnarray}
It is easy to see that each $\gamma_l$ can appear at most in one of the coordinates of $\gamma^{(j)}$ with coefficient $\pm 1$.
(This is also follows from the condition \eqref{bipgamcond}.)
Now consider $T_k$. Assume first that $T_k=K_2$.  So $P_k=\{u,v\}$. In order to be able to solve the original system 
one needs that condition $\gamma^{(k)}(u)-\gamma^{(k)}(v) =0$.  Assume  $T_k=K_{1,n_k-1}$, where $n_k\ge 3$.  Let $u$ be the center of the star.
Then the solvability condition is:
\begin{equation}\label{solcondstar}
	\gamma^{(k)}(u)=\sum_{v\in P_k} \gamma^{(k)}(v).
\end{equation} 
In both cases, since each $\gamma(w)$ appears exactly once in some degree of $T_k$ with coefficient $\pm 1$, we deduce that
this is equivalent to the fact that a basis to the null space of $A(T)\trans$ is $\y\trans=(\1_{V_1}\trans, -\1_{V_2}\trans)$.
 \begin{theorem}\label{1admisflowtree}
	Assume that a tree $T$ has $1$-flow, i.e. $T$ is a balanced bipartite graph. Let $T=T_1\supset\ldots \supset T_k$ be the subtrees defined as above.
	Then the following conditions hold:
	\begin{enumerate}
		\item The unique flow is integer valued.
		
		\item $\omega_1(e)=1$ for $e\in E(P_1)$.  Hence $\omega(E(P_1))=p_1$.

		\item If $i$ is even then $\omega_i(e)\le 0$ for $e\in E(P_i)$.  If $i$ is odd then $\omega_i(e)\ge 1$ for $e\in E(P_i)$.

		\item 
			\begin{equation}\label{sumxji}
				(-1)^i\omega(E(P_i))\ge \sum_{j=0}^{i-1} (-1)^{j} p_{i-j} \textrm{ for }i=2,\ldots,k
			\end{equation}

		\item Let $V(T)=V_1(T)\cup V_2(T)$ be the bipartite decomposition of the balanced tree $T$. Then both $V_1(T)$ and $V_2(T)$ contain a leaf.

		\item If $p_1=2$ then $T$ is a path and $\omega$ is $\{0,1\}$-flow.

		\item If $p_1=3$ the $T$ has the shape ``T'' and $\omega$ is $\{0,1\}$-flow.

		\item Assume that $p_1\ge 4$.  Then $n\ge 6$ and the flow is 1-sum $\{1-\lfloor\frac{p_1}{2}\rfloor, 2-\lfloor\frac{p_1}{2}\rfloor,\ldots,\lfloor\frac{p_1}{2}\rfloor\}$. 

		\item In particular, the flow is in $[2-\frac{n}{2}, \frac{n}{2}-2]$.  The lower bound achieved only for the unique tree $T_{\min}$, where
			$T_{\min}$ is $K_2$ with appended $\frac{n-2}{2}$ vertices to each vertex of $K_2$.  For $T_{\min}$ we obtain that the flow is 1-sum $\{\frac{4-n}{2},1\}$
			flow.  The upper bound is obtained on for the unique tree $T_{\max}$, $T_{\max}$ the path on $4$ vertices, $PL_4$, with $\frac{n-4}{2}$ vertices appended 
			to each leaf of $PL_4$.  For $T_{\max}$ we obtain the flow is 1-sum $\{\frac{6-n}{2},1, \frac{n-4}{2}\}$. 

		\item The other optimal tree $T_{\opt}$, different from $T_{\max}$ and $T_{\min}$, on $n\ge 8$ vertices is obtained as follows.  Take the path
			 $PL_4:=v_1 - v_2 - v_3-v_4 $,  Add $\frac{n-4}{2}$ leaves at $v_1$, $\frac{n-6}{2}$ leaf at $v_4$ and one leaf at
			 $v_2$.  Then this flows is 1-sum $\{\frac{6-n}{2},\frac{8-n}{2}, 1, \frac{n-8}{2},\frac{n-6}{2}\}$.
	\end{enumerate}
\end{theorem}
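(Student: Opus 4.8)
The plan is to read off the unique flow from the leaf-pruning of $T$ already set up before the statement, and to convert it into a recursion on a rooted version of $T$. Root $T$ at the terminal block $T_k$; for a vertex $v$ pruned at level $i$ put $x(v):=\gamma^{(i)}(v)$, the weight $\omega_i(e)$ on the edge joining $v$ to its unique surviving neighbour. Then \eqref{defxlij} gives the recursion
\[
x(v)=1-\sum_{u\ \mathrm{child\ of}\ v} x(u),
\]
where the children of $v$ are exactly the neighbours pruned at strictly smaller levels. Part 1 is then immediate from Lemma \ref{existgamRflow}(2): here $\gam=\1_n\in\Z^V$ and \eqref{bipgamcond} holds because $T$ is balanced, so the unique solution of \eqref{sysomeggam} lies in $\Z^E$. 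Part 2 is the base case: a leaf $v\in P_1$ has degree one, so its single edge must carry $\gamma(v)=1$, whence $\omega_1(e)=1$ on all $p_1$ edges of $E(P_1)$ and $\omega(E(P_1))=p_1$.

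For part 3 I would induct on the pruning level using the recursion above, together with the fact that the level of $v$ equals one more than the maximum of its children's levels, so every $v$ at level $i\ge 2$ has at least one child at level $i-1$, of the opposite parity. The sign bookkeeping is the delicate point: an odd-level child contributes a term $\ge 1$ (lowering $x(v)$) while an even-level child contributes a term $\le 0$ (raising it), so a uniform sign for $x(v)$ does not fall out of the recursion termwise, and one must bring in the balance condition \eqref{bipgamcond} to control the even-level contributions. I expect the terminal block $T_k$ to need separate treatment: when $T_k=K_2$ its two endpoints are pruned simultaneously and the weight on the connecting edge is governed by both subtrees at once rather than by a single child, so the sign pattern is most fragile there. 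This terminal edge is, in my view, the main obstacle in the whole theorem.

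For part 4 I would sum the recursion over $v\in P_i$, obtaining $\omega(E(P_i))=p_i-\sum x(u)$ with $u$ ranging over the children of level-$i$ vertices; since a level-$(i-1)$ vertex may have its parent above level $i$, not every such vertex is a child here, which is why \eqref{sumxji} is a one-sided estimate rather than an equality, with the signs from part 3 orienting the telescoped bound $\sum_{j=0}^{i-1}(-1)^jp_{i-j}$ and \eqref{seqpendvert} forcing the partial sums to behave monotonically. Part 8, the range $[2-\tfrac n2,\tfrac n2-2]$, is the payoff, and here I would prefer a cleaner derivation that avoids the sign subtleties: deleting an edge $e=\{a,b\}$ splits $T$ into $T_a\ni a$ and $T_b\ni b$, and pairing the vertex equations on $T_a$ against the null vector $\y$ of Lemma \ref{existgamRflow}(2) gives $\omega(e)=\pm\bigl(|T_a\cap V_1|-|T_a\cap V_2|\bigr)$. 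As $T$ is balanced each side has both colours present unless it is a single leaf, so the imbalance is at most $\min(|T_a|,|T_b|)-2\le\tfrac n2-2$, with equality forcing the smaller side to be a star on $\tfrac n2$ vertices.

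Finally, parts 5, 9, 10 are the structural refinements. Part 5 follows from the pruning: the last block $T_k$ is a $K_2$ or a star and supplies leaves of both colours, while a balanced tree cannot have all leaves in one class. For parts 9 and 10 I would run the equality analysis of part 8: the extreme weight $\tfrac n2-2$ forces the star structure just identified, pinning $T_{\min}$ and $T_{\max}$ uniquely, and then a short enumeration of the next-largest attainable imbalance isolates the third optimal tree $T_{\opt}$; the remaining labour is proving that no other tree reaches the bound (uniqueness of these extremal shapes) and evaluating the flow explicitly on each. I would dispatch parts 6 and 7 ($p_1\in\{2,3\}$) as small base cases read directly off the recursion, since the pruning collapses there after one or two steps.
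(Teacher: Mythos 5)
Your treatment of parts 1, 2, 6 and 7 coincides with the paper's (part 1 via Lemma \ref{existgamRflow}, part 2 by degree-one forcing, 6--7 as base cases), and your recursion $x(v)=1-\sum_u x(u)$ is exactly the paper's \eqref{defxlij}. The decisive issue is part 3, which you flag as delicate but never prove: you hope the balance condition \eqref{bipgamcond} will control the mixed-parity contributions. It cannot, because part 3 as stated fails on balanced trees. Take $V_1=\{v,c,q,s,s'\}$, $V_2=\{a,a',u,w,r\}$ with edges $va$, $va'$, $vu$, $uc$, $vw$, $wq$, $qr$, $rs$, $rs'$: a balanced tree on $10$ vertices whose unique $1$-sum flow is $\omega(va)=\omega(va')=\omega(uc)=\omega(rs)=\omega(rs')=1$, $\omega(vu)=0$, $\omega(qr)=-1$, $\omega(wq)=2$, $\omega(vw)=-1$. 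The pruning gives $P_1=\{a,a',c,s,s'\}$, $P_2=\{u,r\}$, and $T_3$ is the star on $\{v,w,q\}$ centred at $w$, so $vw\in E(P_3)$ carries $\omega_3(vw)=\gamma^{(3)}(v)=1-1-1-0=-1$ at the odd level $i=3$, contradicting $\omega_i(e)\ge 1$. Nor is the failure confined to the terminal block, as you conjecture: in the balanced tree with edges $va$, $vb$, $bc$, $vw$, $wf$, $wx$, $xy$, $yz$, $zt$ one has $v\in P_3$ while $T_4=K_2$, and $\omega_3(vw)=1-\omega_1(va)-\omega_2(vb)=0<1$. So your plan for part 3 cannot be completed; in fairness, the paper's own one-line ``continuing in this manner'' has exactly the hole you sensed (a vertex whose pruned neighbours sit at levels of both parities), so your instinct was better than the source --- but flagging the obstruction is not closing it. Part 4 inherits the problem: on the first tree above $(-1)^3\omega(E(P_3))=-1$ while the right side of \eqref{sumxji} is $p_3-p_2+p_1=5$; note that the paper's own computation in fact derives $\omega_i(E(P_i))\le p_i-p_{i-1}+\cdots$, i.e.\ the opposite sign from the printed \eqref{sumxji}, so your ``telescoped bound oriented by the signs from part 3'' rests on two statements that do not survive scrutiny.

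The other substantive gap is that you never prove part 8. Your cut identity $\omega(e)=\pm\bigl(|T_a\cap V_1|-|T_a\cap V_2|\bigr)$ is correct, and it gives a genuinely different and cleaner proof of the interval $[2-\frac{n}{2},\frac{n}{2}-2]$ of part 9 than the paper's, with an equality analysis well suited to pinning down $T_{\min}$ and $T_{\max}$; but that interval is the weaker claim. Part 8 asserts values in $\{1-\lfloor p_1/2\rfloor,\ldots,\lfloor p_1/2\rfloor\}$, a bound in the number of leaves, and it is this bound the paper consumes in parts 9--10 (a balanced tree has at most $n-2$ leaves, attained only by $T_{\min}$, and otherwise at most $n-4$, with exactly four such trees to enumerate). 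The paper proves part 8 by induction on $n$: choose leaves $u\in V_1(T)$, $v\in V_2(T)$ (guaranteed by part 5), delete them, and repair the flow of the smaller tree along the $u$--$v$ path by an alternating $\pm1$ flow, with a separate case when $u$'s neighbour has degree $2$ (forcing weights $1$ and $0$). Your proposal contains no route to the $\lfloor p_1/2\rfloor$ bound, and your extremal enumeration for 9--10 is explicitly deferred. Finally, your argument for part 5 is invalid: leaves of the terminal block $T_k$ need not be leaves of $T$ (for $T_{\min}$ the terminal $K_2$ consists of the two centres), and ``a balanced tree cannot have all leaves in one class'' is precisely part 5, not a proof of it; the paper's two-line count --- if $V_1(T)$ had no leaf then $n-1=|E(T)|\ge 2|V_1(T)|=n$ --- is the correct fix.
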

\begin{proof}  
	\begin{enumerate}

	\item This follows from part \emph{2.} of Lemma \ref{existgamRflow}.
	
	\item Self evident.
	
	\item  Use \eqref{defxlij}, the fact that $\gam=\1_n$ and each $\omega_1(e)=1$ for $e\in E(P_1)$ to deduce that that each 
		$\omega_2(e)\le 0$ for $e\in E(P_2)$. 	Continuing in this manner, using \eqref{defxlij} we deduce the claim.
	
	\item  Let $E'(P_i)$ be the subset of all  edges in $E(P_i)$  which are connected to $P_{i+1}$.
		(Note that some leaves in $T_i$ may be connected to nonleaf vertices in $T_{i+1}$.)
		Then summing the $1$-flow on all vertices in $P_i$, for $i\ge 2$ we get
		\begin{equation}\label{flowPi}
		p_i=\omega_{i-1}(E'(P_{i-1}))+\omega_i(E(P_i)).
		\end{equation}
		Let $i=2$.  As $\omega_1(e)=1$ for each $e\in E(P_1)$ we deduce that
		$p_2\le \omega_1(E(P_1))+\omega_2(E(P_2))=p_1+\omega_2(E(P_2))$.
		This establishes \eqref{sumxji} for $i=2$.  
	
		Assume now that $i=3$.  As $\omega_2(e)\le 0$ for each $e\in E(P_2)$ the equality \eqref{flowPi} and the inequality yields the 
		inequality  \eqref{sumxji} for $i=2$ 	yields:
		\[p_3\ge \omega_2(E(P_2))+\omega_3(E(P_3))\ge p_2-p_1+\omega_3(E(P_2)).\]
		This establishes \eqref{sumxji} for $i=3$.   Continuing in this manner we deduce \eqref{sumxji} for $i=4,\ldots,k$. 
	
	\item  Assume to the contrary that $V_1(T)$ does not have a leaf.  So $n-1=|E(T)|\ge 2 |V_1(T)|=n$ as $T$ is a balanced bipartite.
		This is impossible.  Hence $V_1(T)$ contains a leaf.  Similarly, $V_2(T)$ contains a leaf.
	
	\item  Straightforward.

	\item  Straightforward.

	\item  $\omega\in\Z^{n-1}$ it is enough to show that $\omega$ is 1-sum $[1-\lfloor\frac{p_1}{2}\rfloor, \lfloor\frac{p_1}{2}\rfloor]$ flow.
		We will prove the claim on induction on $n$.  In view of \emph{6.-7.} the claim holds for $n=2,4$.   Assume that the claim holds for all even $n$,
		where $n\le 2N$.  Assume that $n=2N+2$.   In view of \emph{6.-7.} we assume that $p\ge 4$.   Let $T$ be a balanced tree on $2N+2$ vertices
		with $p\ge 4$ leaves.  Let $\omega:E(T)\to \Z$ be the unique $1$-flow on $T$.
		Let  $u\in V_1(T),v\in V_2(T)$ be two leaves of $T$.  Assume that $\{u,u_1\}, \{v,v_1\}\in E(T)$. 
		Take the path $Q$ in $T$ connecting $u$ and $v$ given by $e_1=\{u,u_1\}-e_2-\ldots-e_{2l+1}=\{v_1,v\}$.
		Note that there is the following flow on $Q$: 
		\[\theta(e_1)=\theta(e_3)=\ldots=\theta(e_{2l+1})=1, \quad \theta(e_2)=\ldots=\theta(e_{2l})=-1.\]
		Let $T'$ be the tree obtained from $T$ by deleting the vertices $u,v$.  
		Denote $F=\{e_2,\ldots,e_{2l}\}\subset E(T')$ and assume that $T'$ has $p'$ leaves.
		Let $\omega':T'\to \Z$ be the unique $1$-flow on $T'$.
		Then $\omega'(e)=\omega(e)$ if $e\in E(T')\set F$ and $\omega'(e_j)=\omega(e_j)-\theta(e_j)$ for $j=2,\ldots,2l$.
		So $\omega'(e)-1\le \omega(e)\le \omega'(e)+1$ for each $e\in E(T')$.
	
		Suppose first that $\deg(u_1),\deg(v_1)\ge 3$.  Then $p'=p-2$.  By induction hypothesis
		\[2-\lfloor \frac{p}{2}\rfloor=\1-\lfloor \frac{p-2}{2}\rfloor\le \omega'(e)\le \lfloor \frac{p-2}{2}\rfloor=-1+\lfloor \frac{p}{2}\rfloor.\]
		This proves \emph{8.} in this case.
	
		Suppose now that $\deg(u_1)=2$.  Then $\omega(e_1)=1$ and $\omega(e_2)=0$.  Delete vertices $u,u_1$ in $T$ to obtain
		a balanced tree $T''$ with $2N$ vertices and $p''$ pendant vertices.  Clearly $p''\le p$. Also the $1$-flow on $T''$ coincides.
		Use the induction hypothesis to deduce \emph{8}.

	\item[9-10]  Clearly the maximal number of leaves in a balanced tree is $p_1=n-2$.  This equality is achieved only for the tree $T_{\min}$.
		Apply \emph{8.} to deduce that the value of each $1$-flow on a balanced tree on $n$ vertices is not less than $\frac{4-n}{2}$.
		For $T_{\min}$ the $1$-flow is $\{\frac{4-n}{2},1\}$-flow.  Other balanced tree on $n$ vertices have at most $n-4$ leaves.
		Use \emph{8.} to deduce that the value of each $1$-flow on a balanced tree on $n$ vertices is not more than $\frac{n-4}{2}$.
		There are four nonisomorphic balanced trees with $n-4$ leaves.  $T_{\max}, T_{\opt}$ and  $S_1,S_2$.  $S_1$ is obtained from $T_{\min}$ by 
		deleting one leaf in $V_1(T_{\min})$ and adjoining one vertex of a leaf in $V_2(T_{\min})$. 
		$S_2$ is obtained from $T_{\max}$ by removing one leaf from $V_1(T_{\max})$ and from $V_2(T_{\max})$ and adjoining these two leaves to
		$v_2$ and $v_3$, respectively.
		 For $T_{\max}$ the $1$-flow is 
		$\{\frac{6-n}{2},1, \frac{n-4}{2}\}-flow$.  For $T_{\opt}$ the $1$-flow is $\{\frac{6-n}{2},\frac{8-n}{2}, 1, \frac{n-8}{2},\frac{n-6}{2}\}$-flow.
		For $S_1$ the $1$-flow is $\frac{6-n}{2},0,1\}$-flow.  For $S_2$ the $1$-flow is $\{\frac{8-n}{2},1, \frac{n-8}{2}\}$-flow.
	
		If $T$ has at most $n-5$ leaves then \emph{8} implies that the range of $1$-flow is in $[\frac{8-n}{2}, \frac{n-6}{2}]$.
	\end{enumerate}
\end{proof}

\subsection{The range of 1-sum flows on Unicyclic graphs}
We can also find a bound for the range of a 1-sum flow on a connected unicyclic graph, i.e. a graph which is obtained from a tree by adding a single edge.  
As for trees we call a vertex of degree one a leaf, and just as for trees the number of leaves turns out to control the range of the 1-sum flows. The bound is also 
strongly dependent on whether the graph is bipartite or not, with bipartite graphs giving us a narrower range, and in each case we find graphs for which the stated bound is optimal.

\begin{theorem}\label{1flowconntr}
	Let $G=(V,E)$ be a connected unicyclic graph, with $|V|=n=|E|$, which has a 1-sum flow.  Assume that $G$ has $p\ge 0$ leaves.  
	
	Then one of the following conditions holds:
	\begin{enumerate}
		\item $p=0$. In this case $G$ is a cycle and has a 1-sum $\{\frac{1}{2}\}$-flow

		\item $p=1$. If $G$ has a $1$-sum flow then it has a 1-$[0,1]$-flow
				
		\item $p\geq 2$ and $G$ is not bipartite.  Then $G$ has a 1-sum $[1-p,p]$-flow.  
		
		This bound is optimal for the graph obtained by taking the disjoint union of  a triangle and $K_{1,p+1}$ and joining one vertex on the 
		triangle to one of the leaves of the $K_{1,p+1}$.

		\item $p\geq 2$ and $G$ is a balanced bipartite graph.    Then $G$ admits a 1-sum $[1-\lfloor\frac{p}{2}\rfloor,\lfloor\frac{p}{2}\rfloor]$-flow.  
		
		This bound is optimal for the graph obtained by taking two copies of $K_{1,p/2}$ and joining the two high degree vertices by a six vertex 
		path, giving a total of $p+8$ vertices in the graph, and then adding an edge so that the middle 4 vertices of the path form a 4-cycle.

	\end{enumerate}
\end{theorem}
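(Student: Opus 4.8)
The plan is to use the structure of a connected unicyclic graph: with $|V|=|E|=n$, $G$ consists of a single cycle $C$ together with vertex-disjoint trees (branches) rooted at vertices of $C$, and the leaves of $G$ are exactly the leaves of these branches, $p$ in total. Since every $\deg(v)\ge 2$ forces all degrees equal to $2$, the case $p=0$ is precisely $G=C_n$, on which $\omega\equiv\tfrac12$ is a $1$-sum flow; this gives item 1. For $p=1$ there is a single pendant path attached to $C$; the leaf forces $1$ on its terminal edge, and conservation propagates the alternating values $1,0,1,0,\dots$ down the path toward $C$, all in $[0,1]$. It then remains to solve the induced system on the cycle with one modified vertex demand, which I would do by the explicit alternating formula below and check that the cycle weights also lie in $[0,1]$, separating the odd-cycle subcase (unique solution) from the even-cycle subcase (one free parameter, solvable exactly when the balanced condition of Lemma \ref{existgamRflow} holds).

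The engine for $p\ge 2$ is a bound on branch edges. For a branch edge $e$, let $S$ be the subtree on the far side of $e$ from $C$ and let $\ell_S$ be the number of leaves of $G$ inside $S$. I claim $1-\ell_S\le \omega(e)\le \ell_S$, proved by induction up the branch from the leaves: a leaf edge carries $1$, and at an internal branch vertex $w$ with children subtrees $S_1,\dots,S_r$ conservation gives $\omega(e)=1-\sum_i\omega(e_{ww_i})$, so the inductive bounds combine using $r\ge 1$ and $\ell_S=\sum_i\ell_{S_i}$. In particular every branch edge lies in $[1-p,p]$. For item 3 the cycle is odd and the $1$-sum flow is unique; writing $f_j$ for the edge joining $C$ to the branch at its $j$-th vertex and $s_j=1-\omega(f_j)$ for the residual demand there (with $\omega(f_j)=0$ when no branch is attached), the cycle weights satisfy the alternating formula $x_j=\tfrac12\sum_{i=1}^{\ell}(-1)^{i-1}s_{j+i}$. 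Since $|\omega(f_j)|\le p_j$, the leaf count of branch $j$, and $\sum_j p_j=p$, this gives $x_j\in[\tfrac{1-p}{2},\tfrac{1+p}{2}]\subseteq[1-p,p]$, so the whole flow lies in $[1-p,p]$. Optimality I would verify directly on the stated graph (a triangle joined to a leaf of $K_{1,p+1}$): the star forces the weights $1-p$ and $p$ on the two edges meeting the attaching vertex, attaining both endpoints.

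For the balanced bipartite case (item 4) the cycle is even, so by Lemma \ref{existgamRflow} the kernel of $A(G)$ is one dimensional and equals the alternating $\pm1$ flow supported on $C$. Consequently the branch edges are identical in every $1$-sum flow, while the cycle weights may be shifted by a single free parameter $t$. The plan is to choose $t$ so that all cycle weights fall in $[1-\lfloor p/2\rfloor,\lfloor p/2\rfloor]$, and separately to control the branch edges. The main obstacle is exactly these branch edges: the crude bound above yields only $[1-p,p]$, and the improvement to $\lfloor p/2\rfloor$ must be extracted from the balanced condition $|V_1|=|V_2|$, which is what stops leaves from concentrating in one branch. My first attempt would mirror the tree argument of Theorem \ref{1admisflowtree}, item 8: find two leaves $u,v$ in opposite colour classes, delete them along with the alternating $\pm1$ flow on the path joining them, and apply induction to the smaller balanced bipartite unicyclic graph, the weights changing by at most $1$ along that path.

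The step I expect to be genuinely hard is handling configurations in which all leaves lie in a single colour class, which can occur here (unlike for balanced trees, cf.\ item 5 of Theorem \ref{1admisflowtree}), so that no opposite leaf pair is available; indeed one must check carefully that a branch carrying a large star near the cycle does not force a weight below $1-\lfloor p/2\rfloor$, and it is precisely the balance constraint that has to be used to rule this out. In that regime I would instead combine the cycle's free parameter with a two-sided colour count across $e$, using $\omega(e)=|S\cap V_1|-|S\cap V_2|=|R\cap V_2|-|R\cap V_1|$ for $R=V\setminus S$, to play the two sides of $e$ against the global identity $|V_1|=|V_2|$ and so sharpen the branch bound to $\lfloor p/2\rfloor$. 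Finally I would confirm optimality on the stated graph (two copies of $K_{1,p/2}$ linked by a path whose middle four vertices form the $4$-cycle) by computing that its central edge carries weight $\lfloor p/2\rfloor$.
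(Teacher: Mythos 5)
Your treatment of items 1--3 is correct, and for item 3 it is a genuinely different route from the paper's. The paper proves both $p\ge 2$ cases by a double induction on $n$ and $p$: if a leaf has a degree-$2$ neighbour, delete both and extend the inductive flow by weights $1$ and $0$; otherwise delete two leaves $u,v$, take the inductive flow on $G\setminus\{u,v\}$, and repair it by an alternating $\pm1$ modification along an odd walk (non-bipartite case, each edge possibly hit twice, hence the width $2$ loss per pair of leaves) or an odd path (bipartite case, width $1$ loss). Your item 3 instead computes the unique flow directly: the branch-edge bound $1-\ell_S\le\omega(e)\le\ell_S$ by induction up from the leaves is correct (the upper bound via $\sum_i\omega(e_i)\ge r-\ell_S$ with $r\ge1$ checks out), and the explicit odd-cycle formula gives $x_j=\frac12\bigl(1-\sum_i(-1)^{i-1}\sigma_{j+i}\bigr)\in[\frac{1-p}{2},\frac{1+p}{2}]\subseteq[1-p,p]$ since $\sum_k|\sigma_k|\le p$. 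What this buys: a non-inductive argument that actually yields a sharper bound on cycle edges than the paper's, and it avoids a soft spot in the paper's induction (at the base $p'\in\{0,1\}$ the inductive ranges degenerate and the paper's $\pm$-modification bookkeeping does not literally close; your direct computation has no such issue). Your explicit verification of the extremal graphs, which the paper only asserts, is also sound.

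Item 4, however, is left with a genuine gap, and you flag it yourself: you never establish the branch-edge bound $|\omega(e)|\lesssim\lfloor p/2\rfloor$ (or find the good shift $t$) in the regime where all leaves lie in one colour class, and your proposed fallback via $\omega(e)=\pm(|S\cap V_1|-|S\cap V_2|)$ does not by itself close the bound --- to control the colour imbalance of a subtree by roughly $p/2$ you would need the balance hypothesis fed back through essentially the same induction you were trying to avoid. The good news is that the configuration you fear cannot obstruct the inductive argument, by a short counting argument you are missing: if the leaf-free class is $V_1$, then $\sum_{v\in V_1}\deg(v)=|E|=n=2|V_1|$ (balance plus unicyclicity), so every vertex of $V_1$ has degree exactly $2$; in particular every leaf's neighbour has degree $2$ (note such a neighbour cannot lie on the cycle), and the leaf-plus-degree-$2$-neighbour reduction on $n$ applies, assigning weights $1$ and $0$ and preserving balance, unicyclicity, and $p'\le p$. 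Hence whenever that reduction is unavailable, both colour classes contain leaves, and the paper's induction on $p$ --- delete opposite-class leaves $u,v$, modify by alternating $\pm1$ along the odd $u$--$v$ path, losing width at most $1$ per deleted pair --- goes through exactly as in item 8 of Theorem \ref{1admisflowtree}. This ordering (degree-$2$ reduction first, opposite-class pair only in its absence) is also what the paper's own proof tacitly relies on without stating the counting argument; with it, your item 4 plan completes, and without it, it does not.
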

\begin{proof}  
	\begin{enumerate}
		\item Set each the value on each edge to $\frac{1}{2}$.
	
		\item If $p=1$ then $G$ consists of a cycle $C$ joined to a path $P$ by a single edge $e=\{u,v\}$, where $u\in C$. The flow on the path 
			is uniquely determined, and is locally a  flow with only values 0 and 1.   If the flow on $e$ is 0 we can set the weight on every edge 
			in $C$ to $\frac{1}{2}$ and we are done.  If the flow on $e$ is 1 then $C\setminus u$ is a path with an even number of vertices, since 
			a 1-sum flow exists, and we can set the weight on a perfect matching in that path to 1 and 0 on the remaining edges, and so we have a 
			flow on $G$ with only weights 0 and 1.

		\item We inductively assume that the theorem is true for smaller $n$ and $p$.  Let $u$ and $v$ be two leaves of $G$.  If $u$ is adjacent to  
			a vertex $w$ of degree 2 then 	$G'=G\setminus \{u,w\}$ has a 1-sum $[1-p,p]$-flow, by induction on $n$, and by setting the weight on the 
			edge $\{u,w\}$ to 1 we can extend this to a 1-sum $[1-p,p]$-flow on $G$, and we can follow the same procedure if $v$ is adjacent to a vertex 
			of degree 2. Hence we can assume that $u$ and $v$ are not adjacent to vertices of degree 2.
	
			Since $G$ is not bipartite 	there exists a walk $W$ of odd length in $G$ from $u$ to $v$. 	By induction on $p$ the graph 	
			$G'=G\setminus \{u,v\}$ has a 1-sum flow of the desired range. We can now build a 1-sum flow on $G$ by setting the flow on the edges incident to
			$u$ and $v$ to 1, and then alternatingly subtract and add 1 to the weight of the edges along $w$. In this way we get a 1-sum flow on $G$, and 
			since $G'$ had two less leaves than $G$ and our modification changed each weight by at most 2, which happens if the edge was traversed
			twice by the walk $W$, we get a flow of the desired range.

		\item In this case $G$ is a balanced bipartite graph containing a single even cycle $C$.  Let $u$ and $v$ be two leaves of $G$ belonging to 
			different parts of the bipartition. If either one of them, say $u$, is adjacent to a vertex $w$ of degree 2 then $G'=G\setminus \{u,w\}$ is also 
			a balanced bipartite graph and, by induction on $n$, it has a 1-sum flow with the desired range. By setting the weight on the edge $\{w,u\}$ to 1
			and the weight on the other edge incident to $w$ to 0, extend this to 1-sum flow of the desired range on $G$. Hence we can assume that $u$ 
			and $v$ are not adjacent to vertices of degree 2.
			
			Since $u$ and $v$ are in different parts there exists a path from $u$ to $v$ in $G$ of odd length. By induction on $p$  the graph
			$G'=G\setminus \{u,v\}$ has a 1-sum flow of the desired range. We can now build a 1-sum flow on $G$ by setting the flow on the edges incident to
			$u$ and $v$ to 1, and then alternatingly subtract and add 1 to the weight of the edges along $w$. In this way we get a 1-sum flow on $G$, and 
			since $G'$ had two less leaves than $G$ and our modification changed each weight by at most 1 we get a flow of the desired range.

	\end{enumerate}

\end{proof}
%

\subsection{The range of 1-sum flows for general connected graphs}
Our last two results give optimal bounds for the range of a 1-sum flow on very sparse graphs.  For denser graphs there is a wider variety of behaviors and we 
do not have, or expect, an optimal bound in terms of any simple graph parameter.  However, when one considers denser graphs $G$ the expectation in general 
is that the optimal range should become narrower, especially since a 1-sum $[a,b]$-flow on a spanning subgraph can be extended to a 1-sum flow on $G$ with range 
$\{0\}\cup [a,b]$ in the obvious way.  For a graph with a $k$-factor this immediately leads to a 1-sum flow with a narrow range.
\begin{lemma}
	If $G$ has a $k$-regular spanning subgraph then $G$ has a $1$-sum $[0,\frac{1}{k}]$-flow.
\end{lemma}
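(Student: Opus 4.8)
The plan is to exhibit the required flow directly, realizing the extension-by-zero principle noted just above the statement. Let $H=(V,E(H))$ be the $k$-regular spanning subgraph of $G$ guaranteed by the hypothesis, so that $E(H)\subseteq E$ and $\deg_H(v)=k$ for every $v\in V$. The key observation is that on $H$ alone the constant assignment $\frac1k$ to each edge is already a $1$-sum flow: every vertex meets exactly $k$ edges of $H$, so its value is $k\cdot\frac1k=1$. Transplanting this to $G$ and filling in the remaining edges with zeros should do everything.

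Concretely, first I would define $\omeg:E\to\R$ by $\omega(e)=\frac1k$ if $e\in E(H)$ and $\omega(e)=0$ otherwise. Then for each $v\in V$ one computes
\[
\gamma(v)=\sum_{e\in E,\,v\in e}\omega(e)=\sum_{e\in E(H),\,v\in e}\frac1k=\frac{\deg_H(v)}{k}=\frac{k}{k}=1,
\]
so $\omeg$ is a $1$-sum flow on $G$. Since every edge weight lies in $\{0,\frac1k\}\subseteq[0,\frac1k]$, the map $\omeg$ is a $1$-sum $[0,\frac1k]$-flow, which is exactly what is claimed.

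There is essentially no obstacle here: once the $k$-regular spanning subgraph is fixed the construction is immediate, and it is precisely the instance of the general remark in the preceding paragraph—extending a $1$-sum $[\frac1k,\frac1k]$-flow on a spanning subgraph by zeros to a $1$-sum flow on $G$ with range $\{0\}\cup[\frac1k,\frac1k]\subseteq[0,\frac1k]$. The only points worth stating carefully are that $\deg_H(v)=k$ holds at \emph{every} vertex, so no boundary or low-degree vertices require separate treatment, and that $k\ge 1$ guarantees $\frac1k$ is well defined.
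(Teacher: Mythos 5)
Your proof is correct and is exactly the argument the paper intends: the lemma is stated there without a written proof, being the immediate instance of the preceding remark about extending a flow on a spanning subgraph by zeros, and your explicit construction (weight $\frac{1}{k}$ on the $k$-factor, $0$ elsewhere) fills in that one-line argument precisely.
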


We know that very dense graphs have 1-factors, and that random graphs with positive density have $k$-factors for quite large values of $k$, but there are of 
course quite dense graphs which do not even have a 1-factor.  However all connected graphs have a spanning tree and using this fact our earlier results for 
trees implies a bound on the range for general graphs as well.
\begin{corollary}\label{1flowgengraph}  
	Let $G=(V,E), |V|=n, |E|=m$ be a connected graph.  Then there exists a $1$-flow  if and only $G$ is not a bipartite nonbalanced graph.
	If  a $1$-flow exists, then there exists a flow of the following type:
	\begin{enumerate}
		\item $G$ is a balanced bipartite graph with $n\ge 8$.  Then there exists an integer valued flow with values $\{2-\frac{n}{2},\ldots,\frac{n}{2}-2\}$.
		\item $G$ is nonbipartite graph.  Then there exists a flow $x$ such that $2x\in \Z^m$.  Moreover for $n\ge 6$ its values are in the interval $[5-n,n-5]$.
	\end{enumerate}
\end{corollary}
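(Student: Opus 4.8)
The plan is to read off the existence dichotomy from Lemma \ref{existgamRflow} with $\gam=\1_n$, and to obtain each range statement by passing to a sparse spanning subgraph on which an earlier theorem applies, then extending the flow to $G$ by assigning weight $0$ to every remaining edge. For existence: part 1 of Lemma \ref{existgamRflow} produces a $1$-flow whenever $G$ is nonbipartite, while in the bipartite case part 2 makes solvability equivalent to \eqref{bipgamcond}, which for $\gam=\1_n$ reads $|V_1|-|V_2|=0$, i.e. $G$ balanced. Hence a $1$-flow exists exactly when $G$ is not a nonbalanced bipartite graph, as claimed.

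For the balanced bipartite case I would fix a spanning tree $T$ of $G$. Since $T$ inherits the bipartition of $G$ and $|V_1|=|V_2|$, it is a balanced bipartite tree on $n$ vertices, so by Theorem \ref{1admisflowtree} it carries a unique integer-valued $1$-flow with values in $\{2-\frac{n}{2},\ldots,\frac{n}{2}-2\}$. Extending by $0$ on every non-tree edge (and noting $0$ lies in this range) gives an integer $1$-flow on $G$ with the stated values, which settles part 1.

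In the nonbipartite case the assertion $2x\in\Z^m$ is immediate from the integrality clause of part 1 of Lemma \ref{existgamRflow} applied to $\gam=\1_n\in\Z^V$. For the range I would build a spanning \emph{unicyclic} subgraph $G'=T+e_0$, where $e_0$ is a non-tree edge whose fundamental cycle is odd; such $e_0$ exists because length-parity is additive under symmetric difference of cycles, so not every fundamental cycle can be even when $G$ is nonbipartite. Then $G'$ is connected, unicyclic and nonbipartite, its unique $1$-flow is half-integral, and extending by $0$ gives a $1$-flow on $G$ supported on $G'$; it remains to bound the weights on $G'$.

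This last bound is the main obstacle, and here Theorem \ref{1flowconntr} is not directly sufficient: its estimate $[1-p,p]$ is phrased through the number of leaves $p$, and a unicyclic graph may have $p=n-3$ leaves (a triangle with $n-3$ pendant vertices), so the literal bound exceeds $[5-n,n-5]$ even though the true range is narrower. I would therefore bound the weights of $G'$ intrinsically. For a non-cycle edge $e$, deleting $e$ cuts off a subtree $S$ disjoint from the odd cycle; summing the flow equations over $S$ with signs alternating by level collapses all internal edges and shows $\omega(e)$ equals the signed level-imbalance of $S$, whence $|\omega(e)|\le |S|-2\le (n-3)-2=n-5$. For the cycle edges I would solve the residual system $A(C)\,\omega|_{C}=(1-s_i)_i$, where $s_i$ is the flow funnelled into the trees hanging at the $i$-th cycle vertex; since $A(C)^{-1}$ has entries $\pm\frac12$, each cycle weight is expressed through the $s_i$, and a careful accounting (using $\sum_i|s_i|\le n-\ell$ for a cycle of length $\ell\ge 3$) confines these weights to $[5-n,n-5]$ for $n$ large. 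The extremal graph of Theorem \ref{1flowconntr}, a triangle joined to $K_{1,p+1}$ with $n=p+5$, shows the bound is essentially sharp; it also signals that the hypothesis should read $n\ge 8$ rather than $n\ge 6$, since a triangle with three pendant leaves already forces the value $2>n-5$ at $n=6$.
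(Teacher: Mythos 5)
The paper never actually writes a proof of this corollary: it is asserted on the strength of the preceding one-sentence remark that every connected graph has a spanning tree, the intended argument being to restrict to a spanning tree (bipartite case) or a spanning connected unicyclic subgraph with an odd cycle (nonbipartite case), apply Theorem \ref{1admisflowtree} resp.\ Theorem \ref{1flowconntr}, and extend by $0$. Your existence dichotomy via Lemma \ref{existgamRflow} and your treatment of the balanced bipartite case (spanning tree inherits the balanced bipartition, Theorem \ref{1admisflowtree} gives the unique integer flow in $\{2-\frac{n}{2},\ldots,\frac{n}{2}-2\}$, extend by $0$, which lies in the range) are exactly this intended route and are correct.

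In the nonbipartite case you part ways with the paper, and rightly so. You are correct that Theorem \ref{1flowconntr} cannot literally deliver $[5-n,n-5]$: its bound $[1-p,p]$ with $p$ as large as $n-3$ only gives $[4-n,n-3]$. More importantly, your counterexample is genuine and shows the statement as printed is \emph{false} at $n=6$ and $n=7$: take a triangle $v_1v_2v_3$ with three pendant vertices attached to $v_1$. This graph is nonbipartite with $m=n=6$, so $\rank A(G)=n$ and its $1$-sum flow is unique; the pendant edges carry weight $1$, forcing $\omega_{12}+\omega_{13}=-2$ and $\omega_{12}+\omega_{23}=\omega_{13}+\omega_{23}=1$, hence $\omega_{23}=2\notin[-1,1]=[5-n,n-5]$. (Four pendants, $n=7$, give $\omega_{23}=\frac{5}{2}>2$; five pendants, $n=8$, give $\omega_{23}=3=n-5$, so $n\ge 8$ is indeed the correct threshold, matching the extremal family $(n-2)/2\le n-5$ iff $n\ge 8$.) Since the corollary carries no proof in the paper, your analysis is not merely an alternative — it is the missing (corrected) argument, and your subtree estimate is sound: for a non-cycle edge $e$ cutting off a subtree $S$, $\omega(e)$ is the level-imbalance of $S$, so $|\omega(e)|\le |S|-2\le n-5$ when $|S|\ge 2$, and $\omega(e)=1$ when $|S|=1$ (note your inequality as written fails for $|S|=1$, though the conclusion survives for $n\ge 6$).

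The one real gap is your cycle-edge step. The triangle-inequality accounting you sketch gives $|\omega|\le\frac{1}{2}\sum_j|1-s_j|\le\frac{1}{2}\bigl(\ell+(n-\ell)\bigr)=\frac{n}{2}$, which is at most $n-5$ only for $n\ge 10$; your phrase ``for $n$ large'' leaves $n=8,9$ open, precisely the delicate boundary cases identified above. Closing this requires showing that the worst configuration is all hanging vertices attached as pendants to a single cycle vertex (value $\frac{n-2}{2}$, which equals $n-5$ at $n=8$): e.g.\ one must use that negative contributions $s_j$ cost at least two hanging vertices each ($s_j\ge 2-t_j$), so the crude bound $\sum_j|1-s_j|\le \ell+n-\ell$ is never attained with the sign pattern of $A(C)^{-1}$ that maximizes a cycle weight. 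With that accounting finished, your proof establishes the corollary with part 2 corrected to $n\ge 8$.
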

This results can be sharpened a bit by including information about the independence number $\alpha(G)$ of $G$.   In \cite{bbgks} it was proven that unless 
$G$ is a cycle, a complete graph, or a balanced complete bipartite graph it has a spanning tree the end vertices of which form an independent set in $G$. Using this we get the following.
\begin{corollary}\label{1flowbdinds}
	Let $G=(V,E)$ be a connected graph with independence number $\alpha(G)$.
	\begin{enumerate}
		\item If $G$ is $k$-regular then $G$ has a $1$-sum $\{\frac{1}{k}\}$-flow.
		\item If $G$ is not regular and not bipartite then $G$ has a $1$-sum $[1-\alpha(G),\alpha(G)]$-flow.
		\item If $G$ is not regular, but is bipartite and balanced, then $G$ has a $1$-$[\lfloor\frac{\alpha(G)}{2}\rfloor, -\lfloor\frac{\alpha(G)}{2}\rfloor]$-flow.
	\end{enumerate}
\end{corollary}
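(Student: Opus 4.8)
The plan is to reduce every case to the sharp bounds already obtained for trees in Theorem~\ref{1admisflowtree} and for unicyclic graphs in Theorem~\ref{1flowconntr}, and then to pass from a sparse spanning subgraph to all of $G$ by assigning weight $0$ to each edge outside that subgraph (as noted just before the corollary, this enlarges the range only by the harmless value $0$). The essential input is the cited result of \cite{bbgks}: every connected graph that is not a cycle, a complete graph, or a balanced complete bipartite graph admits a spanning tree $T$ whose set of leaves is independent in $G$. The observation that unlocks the argument is that \emph{all three exceptional graphs are regular}; hence in parts \emph{2.} and \emph{3.}, where $G$ is assumed non-regular, such a tree $T$ exists, and since its $p$ leaves form an independent set we have $p\le\alpha(G)$. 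Part \emph{1.} needs none of this: if $G$ is $k$-regular, put $\omega(e)=\frac1k$ on every edge, so each vertex receives $k\cdot\frac1k=1$.

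For part \emph{3.}, the spanning tree $T$ inherits the bipartition of $G$, hence is a balanced bipartite tree and carries its unique integral $1$-flow by Theorem~\ref{1admisflowtree}. Because $T$ has $p\le\alpha(G)$ leaves, parts \emph{6.--8.} of that theorem confine the flow values to $[1-\lfloor\frac p2\rfloor,\lfloor\frac p2\rfloor]\subseteq[-\lfloor\frac{\alpha(G)}2\rfloor,\lfloor\frac{\alpha(G)}2\rfloor]$, the inclusion following from $\lfloor\frac p2\rfloor\le\lfloor\frac{\alpha(G)}2\rfloor$ and $1\ge 0$. Extending by zeros preserves all vertex-sums and the range, yielding the desired $1$-sum $[-\lfloor\frac{\alpha(G)}2\rfloor,\lfloor\frac{\alpha(G)}2\rfloor]$-flow on $G$.

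For part \emph{2.} the tree $T$ is bipartite but possibly unbalanced, so it need not support a $1$-flow; this is where the odd cycle must be reintroduced. Since $G$ is not bipartite, some non-tree edge $e$ closes an odd cycle with $T$ (otherwise the bipartition of $T$ would extend consistently to all of $G$). Then $G'=T+e$ is connected, unicyclic and non-bipartite. Adding $e$ creates no new leaves and cannot join two leaves of $T$ (which are non-adjacent in $G$), so the number $p'$ of leaves of $G'$ satisfies $p'\le p\le\alpha(G)$. Theorem~\ref{1flowconntr} (case \emph{3.} if $p'\ge 2$, and the benign cases \emph{1.--2.} if $p'\le 1$) provides a $1$-sum flow on $G'$ with range contained in $[1-p',p']\subseteq[1-\alpha(G),\alpha(G)]$; extending it by zeros to all of $G$ finishes the proof.

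The main obstacle is precisely the non-bipartite case: one cannot appeal to the tree flow directly because the chosen spanning tree may be unbalanced, so an odd-cycle edge must be added, and one must verify that this does not push the leaf count above $\alpha(G)$—which is exactly the point at which the independence of the leaves of $T$ is used. The remaining verifications (that the small-leaf subcases of Theorem~\ref{1flowconntr} and the value $0$ introduced by the extension stay inside the target intervals) are routine, using only $\alpha(G)\ge 1$.
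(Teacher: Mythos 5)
Your proof is correct and is essentially the argument the paper intends: the corollary is stated there without an explicit proof, as an immediate consequence of the spanning tree with independent end vertices from \cite{bbgks} (whose three exceptional graphs are all regular, as you observe) combined with the range bounds of Theorem~\ref{1admisflowtree} for the balanced bipartite case and Theorem~\ref{1flowconntr} for the non-bipartite case, where your key step of adding a non-tree edge to close an odd cycle---and checking, via independence of the leaves, that this cannot raise the leaf count above $\alpha(G)$---is exactly the needed reduction. One remark: the interval in part \emph{3.} as printed, $[\lfloor\frac{\alpha(G)}{2}\rfloor, -\lfloor\frac{\alpha(G)}{2}\rfloor]$, has its endpoints reversed (a typo), and your $[-\lfloor\frac{\alpha(G)}{2}\rfloor, \lfloor\frac{\alpha(G)}{2}\rfloor]$ is the intended corrected statement, which your argument in fact sharpens to $[1-\lfloor\frac{\alpha(G)}{2}\rfloor, \lfloor\frac{\alpha(G)}{2}\rfloor]\cup\{0\}$.
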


These bounds are quite far from the actual range for most graphs, since we know that for any fixed $r$ a random graph with minimum degree at least $r$ almost 
surely has an $r$-factor, and hence a 1-sum $[0,\frac{1}{r}]$-flow.

If a graph $G$ has $k$ disjoint spanning trees and we have a 1-sum flow $\omega_T$ on each tree $T$ then the average of these flows, seen as flows on $G$, will 
also be a 1-sum flow on $G$. So in this case we can reduce the bounds in Corollary \ref{1flowgengraph}  by a factor of $\frac{1}{k}$. Here we recall that Nash-Williams \cite{NW} and Tutte \cite{tu2} have characterized the graphs which have $k$ disjoint spanning trees, and so their characterization together with  Corollary \ref{1flowgengraph} give us a collection of graph classes with smaller ranges for their 1-sum flows.

In principle one could use the averaging procedure from the last paragraph for a collection of non-disjoint trees too. On one hand we might now be averaging 
several positive weights on a single edge, in which case we are no longer guaranteed to gain a factor of $\frac{1}{k}$ over the single tree bound but on the 
other hand we might have both positive and negative weights on the same edge, and the cancellation could lead to even greater gains.  It would be interesting 
to see what can be said about a flow obtained in this way by taking a random collection of spanning trees in $G$.

\subsection{Nonnegative $1$-flows}
Let $\Omega_n\subset \R_+^{n\times n}$ be the set of doubly stochastic matrices.  That is $A=[a_{ij}]_{i,j=1}^n$ is a nonnegative matrix such that
each row and column has sum $1$.  Denote by $\cP_n\subset \Omega_n$ the group of $n\times n$ permutation matrices.
Recall the classical result of G. Birkhoff \cite{Bir46}, which is also called Birkhoff-von Neumann theorem \cite{vNe53}.
Namely, the extreme points of doubly stochastic matrices are the permutation matrices.  

Let $\Omega_{n,s}\subset \Omega_n$ be the subset of symmetric doubly stochastic matrices.
The following result due to M. Katz \cite{Kat70}:
\begin{theorem}\label{katzthm}  
	Let $\Omega_{n,s}$ be the set of symmetric doubly stochastic matrices.  Then $A\in\Omega_{n,s}$ is an extreme point of $\Omega_{n,s}$ 
	if and only if $A=\frac{1}{2}(Q+Q\trans)$ for some permutation matrix $Q\in\cP_n$.  Equivalently, there exists   a permutation matrix  
	$P\in \cP_n$ such that $A=PBP\trans$, where $B=\diag(B_1,\ldots,B_t)$ and each $B_j$ is a doubly stochastic symmetric matrix of the following form:
	\begin{enumerate}
		\item The $1\times 1$ matrix $[1]$.
		\item $A(K_2)$.
		\item $\frac{1}{2}A(C)$, where $C$ is a cycle.
	\end{enumerate}
\end{theorem}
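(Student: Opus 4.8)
The plan is to realize $\Omega_{n,s}$ as a bounded polytope and to characterize its vertices by a perturbation argument, then to translate the resulting condition into the incidence‑matrix language used around Lemma~\ref{existgamRflow}. The first step is the elementary reduction: a matrix $A\in\Omega_{n,s}$ is an extreme point if and only if there is no nonzero symmetric $D=D\trans$ with zero row sums (equivalently $D\1=\0$) whose support lies in that of $A$, i.e. $D_{ij}=0$ whenever $a_{ij}=0$, for both off‑diagonal and diagonal entries. Indeed, if such a $D$ exists then for small $\epsilon>0$ both $A\pm\epsilon D$ lie in $\Omega_{n,s}$ and $A=\tfrac12\big((A+\epsilon D)+(A-\epsilon D)\big)$ is a proper midpoint; conversely, if $A=\tfrac12(A_1+A_2)$ with $A_1\ne A_2$ in $\Omega_{n,s}$, then $D=\tfrac12(A_1-A_2)$ is symmetric with zero row sums, and nonnegativity forces its support inside that of $A$.

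Next I would translate this into graphs. Associate to $A$ its support graph $G_A$ on $[n]$: an edge $\{i,j\}$ for each $a_{ij}>0$ with $i<j$, and a loop at $i$ for each $a_{ii}>0$, a loop contributing the column $\e_i$ (degree one) to the incidence matrix $N=N(G_A)$. Then a symmetric $D$ with $D\1=\0$ supported on $\mathrm{supp}(A)$ is exactly a kernel vector of $N$, so by the previous step $A$ is extreme if and only if the columns of $N(G_A)$ are linearly independent. Using the rank count recalled before Lemma~\ref{existgamRflow} (rank $=n'-1$ on a bipartite loopless component, $n'$ otherwise, each loop $\e_i$ raising the rank of a bipartite component), independence forces every connected component of $G_A$ to be one of: a single looped vertex; a tree; a tree carrying exactly one loop; or a connected unicyclic graph whose unique cycle is odd.

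Then I would impose the doubly stochastic positivity constraint to prune this list and pin down the weights, which is the crux. The device is a leaf/pruning argument: a leaf $v$ adjacent to $u$ forces the weight of $\{u,v\}$ to be $1$ (the row sum at $v$), after which the row sum at $u$ is already saturated, so every other edge or loop at $u$ has weight $0$, contradicting its presence in the support unless $u$ is itself a leaf. Iterating eliminates every tree except $K_2$, every loop‑bearing tree, and every odd‑unicyclic graph with pendant edges, leaving: the isolated looped vertex (forced weight $1$, block $[1]$); the edge $K_2$ (forced weight $1$, block $A(K_2)$); and the bare odd cycle $C$, whose cyclic equations $x_i+x_{i+1}=1$ force all weights to $\tfrac12$ precisely because the length is odd (block $\tfrac12A(C)$). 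The same parity point shows no even cycle can occur: an even cycle is bipartite, so its incidence columns are dependent, and the alternating $\pm1$ perturbation on its edges exhibits $\tfrac12A(C)$ as a nontrivial midpoint; thus the cycle $C$ in case~3 is necessarily odd.

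For the converse, perturbations of a block‑diagonal matrix decompose blockwise, so $PBP\trans$ is extreme in $\Omega_{n,s}$ if and only if each $B_j$ is extreme in the corresponding $\Omega_{n_j,s}$, and each of $[1]$, $A(K_2)$, $\tfrac12A(C)$ with $C$ odd has incidence matrix of full column rank, hence is extreme by the criterion above. To assemble the permutation form, after conjugating by $P$ to block‑diagonal shape take $Q_j$ to be the identity on a $[1]$‑block, the transposition on an $A(K_2)$‑block, and the cyclic permutation on a $\tfrac12A(C)$‑block, so that $B_j=\tfrac12(Q_j+Q_j\trans)$ and $Q:=P\,\diag(Q_1,\dots,Q_t)\,P\trans$ yields $A=\tfrac12(Q+Q\trans)$. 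The step I expect to be the main obstacle is the pruning argument: turning independence of incidence columns together with positivity into exactly these three blocks requires carrying the leaf argument out carefully enough to rule out all larger trees, loop‑bearing trees, and decorated cycles at once, and to force the uniform weight $\tfrac12$ via oddness — the parity of $C$ being the delicate point, since the phrase ``$C$ is a cycle'' must be read as ``odd cycle'' for the equivalence to hold.
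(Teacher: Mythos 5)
The paper does not prove this statement at all: it is quoted as a known result of Katz, with a citation to \cite{Kat70}, so there is no internal proof to compare against. Your proposal is a complete, self-contained proof, and it is correct; it is in fact the standard route to Katz's theorem. The chain extreme point $\Leftrightarrow$ no nonzero symmetric perturbation $D$ with $D\1=\0$ supported inside $\mathrm{supp}(A)$ $\Leftrightarrow$ linear independence of the incidence columns of the support graph (loops counted as columns $\e_i$) is sound, the column-count-versus-rank classification of components (tree, tree plus one loop, odd-unicyclic) is right, and the leaf-saturation pruning together with the cyclic system $x_i+x_{i+1}=1$ correctly isolates the blocks $[1]$, $A(K_2)$ and $\tfrac12 A(C)$ with $C$ odd. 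Your proof even dovetails with the paper's own toolkit: the nonsingularity of the odd-cycle incidence matrix that you need for the converse is exactly the computation $\det A(C)=2$ carried out inside the paper's proof of Lemma \ref{existgamRflow}, and the bipartite rank facts you invoke are the ones recalled just before it. Most importantly, you are right to flag the parity issue, and this is a genuine imprecision in the paper's statement rather than in your argument: as literally written, ``$A=\tfrac12(Q+Q\trans)$ for some $Q\in\cP_n$'' is not sufficient for extremality, since a $Q$ containing a cycle of even length at least $4$ yields $\tfrac12 A(C)$ for an even cycle, which your alternating $\pm\epsilon$ perturbation exhibits as a proper midpoint; the theorem is true only with ``cycle'' read as ``odd cycle'' (equivalently, $Q$ having no even cycles of length $\ge 4$), which is also the form needed for Corollary \ref{katzthmcor} and Theorem \ref{exist1nonfl} downstream. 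One cosmetic slip: you say the pruning eliminates ``every loop-bearing tree'' but then correctly retain the isolated looped vertex, so that phrase should read ``every loop-bearing tree with at least one edge''; the argument itself, via a loopless leaf whose neighbor is saturated, already delivers this and needs no repair.
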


\begin{corollary}\label{katzthmcor} 
	Let $\Omega_{n,s,0}$ be the set of symmetric doubly stochastic matrices with zero diagonal.  Then $A\in\Omega_{n,s,0}$ is an 
	extreme point of $\Omega_{n,s,0}$ if and only if $A=\frac{1}{2}(Q+Q\trans)$ for some permutation matrix $Q\in\cP_n$ which does 
	not fix any $i\in[n]$.   Equivalently, there exists   a permutation matrix $P\in \cP_n$ such that $A=PBP\trans$, where 
	$B=\diag(B_1,\ldots,B_t)$ and each $B_j$ is a doubly stochastic symmetric matrix of the forms 2 or 3 given in Theorem \ref{katzthm}.
\end{corollary}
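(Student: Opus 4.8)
The plan is to derive Corollary \ref{katzthmcor} directly from Theorem \ref{katzthm} by intersecting the extreme-point characterization with the linear constraint that the diagonal vanish. Since $\Omega_{n,s,0}$ is a face of $\Omega_{n,s}$ cut out by the equalities $a_{ii}=0$ for all $i\in[n]$, an extreme point of $\Omega_{n,s,0}$ must in particular be an extreme point of $\Omega_{n,s}$: indeed, if $A\in\Omega_{n,s,0}$ could be written as $A=\tfrac12(A'+A'')$ with $A',A''\in\Omega_{n,s}$ distinct, then because each $A',A''\geq 0$ and their average has zero diagonal, both must themselves have zero diagonal, so $A',A''\in\Omega_{n,s,0}$, contradicting extremality in the smaller set. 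Hence the extreme points of $\Omega_{n,s,0}$ are exactly those extreme points of $\Omega_{n,s}$ that happen to have zero diagonal.

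First I would invoke Theorem \ref{katzthm} to write any candidate extreme point as $A=\tfrac12(Q+Q\trans)$ for a permutation matrix $Q$, equivalently $A=PBP\trans$ with $B=\diag(B_1,\ldots,B_t)$ whose blocks are of the three listed types. The next step is to translate the zero-diagonal condition into a condition on $Q$ and on the blocks. The $(i,i)$ entry of $\tfrac12(Q+Q\trans)$ equals $q_{ii}$, which is $1$ precisely when $Q$ fixes $i$ and $0$ otherwise; so the diagonal of $A$ vanishes if and only if $Q$ has no fixed point. Equivalently, in the block form, a diagonal entry can only be nonzero through a block of type 1 (the $1\times 1$ matrix $[1]$), since the blocks $A(K_2)$ and $\tfrac12 A(C)$ both have zero diagonal. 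Thus the zero-diagonal constraint is exactly the statement that no block of type 1 occurs, leaving only blocks of types 2 and 3, and correspondingly $Q$ fixing no index.

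The remaining step is the converse: if $Q$ has no fixed point, or equivalently if $B$ has only type 2 and type 3 blocks, then $A=\tfrac12(Q+Q\trans)$ lies in $\Omega_{n,s,0}$ and is an extreme point of $\Omega_{n,s}$ by Theorem \ref{katzthm}; being in the subset $\Omega_{n,s,0}$, it is a fortiori an extreme point of that subset (an extreme point of a convex set remains extreme in any convex subset that contains it). This closes the equivalence. I do not expect a genuine obstacle here; the content is essentially bookkeeping on the diagonal. The only point requiring a little care is the face argument in the forward direction, ensuring that an extreme point of the zero-diagonal set is automatically extreme in the full set $\Omega_{n,s}$ so that Theorem \ref{katzthm} applies; this relies on the nonnegativity of doubly stochastic matrices to force any convex decomposition of a zero-diagonal point to stay within $\Omega_{n,s,0}$.
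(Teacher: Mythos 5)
Your proof is correct, and it fills in exactly the intended derivation: the paper states Corollary \ref{katzthmcor} without proof as an immediate consequence of Theorem \ref{katzthm}. Your key observation --- that $\Omega_{n,s,0}$ is a face of $\Omega_{n,s}$ (nonnegativity forces any convex decomposition of a zero-diagonal matrix to stay zero-diagonal), so its extreme points are precisely the extreme points of $\Omega_{n,s}$ with zero diagonal, which by the theorem means $Q$ is fixed-point-free, equivalently no $1\times 1$ block $[1]$ occurs --- is the standard argument and is carried out without gaps.
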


Let  $H$ be a simple graph.  $H$ is called $1$-factor, or perfect matching, if each connected component is $K_2$.
$H$ is called $\{1,2\}$-factor if each connected component of $H$ is either $K_2$ or a cycle.
$G$ has $1$-factor, or perfect matching, if $G$ has a spanning subgraph which is $1$-factor.
$G$ has $\{1,2\}$-factor if $G$ has a spanning subgraph which is $\{1,2\}$-factor.
\begin{theorem}\label{exist1nonfl}
	Let $G=(V,E)$ be a simple graph. Then $G$ has $1$-$[0,1]$-flow if and only if one of the following conditions hold:
	\begin{enumerate}
		\item Assume that $G$ is not bipartite.  Then $G$ has $\{1,2\}$-factor.
		\item Assume that $G$ is bipartite.  Then $G$ has $1$-factor.
	\end{enumerate}
	Furthermore, $G$ has a $1$-$(0,1]$-flow if and only if for each $e\in E$ one of the following conditions holds:
	\begin{enumerate}
		\item Assume that $G$ is not bipartite.  Then there exists $\{1,2\}$-factor of $G$ that contains $e$.
		\item Assume that $G$ is bipartite.  Then there exists $1$-factor of $G$ that contains $e$.
	\end{enumerate}
\end{theorem}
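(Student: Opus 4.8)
The plan is to translate the flow problem into the language of symmetric doubly stochastic matrices and then invoke Corollary~\ref{katzthmcor}. First I would identify $V$ with $[n]$ and associate to every $1$-$[0,1]$-flow $\omega$ the matrix $A_\omega=[a_{ij}]$ with $a_{ij}=\omega(\{i,j\})$ when $\{i,j\}\in E$ and $a_{ij}=0$ otherwise. Because $G$ is simple and undirected, $A_\omega$ is symmetric with zero diagonal; the requirement $\omega:E\to[0,1]$ forces its entries into $[0,1]$; and the $1$-sum condition $\sum_{e\ni v}\omega(e)=1$ is exactly the statement that every row sum of $A_\omega$ equals $1$. Thus $1$-$[0,1]$-flows on $G$ correspond bijectively to the polytope
\[
	P:=\{A\in\Omega_{n,s,0}\;:\; a_{ij}=0 \text{ whenever } \{i,j\}\notin E\}.
\]

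The key observation is that $P$ is a \emph{face} of $\Omega_{n,s,0}$, obtained by forcing the valid inequalities $a_{ij}\ge 0$ to hold with equality at every non-edge. Consequently, if $P\neq\emptyset$ then $P$ is a nonempty compact polytope and hence has an extreme point, which is automatically an extreme point of $\Omega_{n,s,0}$. By Corollary~\ref{katzthmcor} every such extreme point has, up to simultaneous permutation, the block form $\diag(B_1,\dots,B_t)$ with each $B_j$ equal to $A(K_2)$ or to $\frac12 A(C)$ for a cycle $C$; since its support lies in $E$, it is precisely a spanning $\{1,2\}$-factor of $G$ carrying weight $1$ on the $K_2$-edges and $\frac12$ on the cycle-edges. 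This yields the nonbipartite equivalence: a $\{1,2\}$-factor gives a $1$-$[0,1]$-flow by this weighting (every vertex has degree $1$ in a $K_2$-block or degree $2$ in a cycle-block, so its edge sum is $1$), and conversely a flow forces $P\neq\emptyset$, producing a $\{1,2\}$-factor.

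For the bipartite case I would use that every cycle of $G$ is then even. An even cycle admits a perfect matching (take alternate edges), so a $\{1,2\}$-factor of a bipartite graph can be converted into a $1$-factor by replacing each cycle component with one of its two alternating matchings and keeping the $K_2$-edges; conversely a $1$-factor is a $\{1,2\}$-factor, and it directly gives a flow via weight $1$ on the matching. Hence for bipartite $G$ the existence of a $1$-$[0,1]$-flow, of a $\{1,2\}$-factor, and of a $1$-factor are all equivalent, which is the second case.

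Finally, for the $1$-$(0,1]$ refinement I would exploit convexity. A $1$-$(0,1]$-flow is a point of $P$ all of whose edge-coordinates are strictly positive. For necessity, I would write such a flow as a convex combination $\sum_i\lambda_i V_i$ with $\lambda_i>0$ of extreme points $V_i$ of $P$, each a $\{1,2\}$-factor; since $\omega(e)>0=\sum_i\lambda_i V_i(e)$ for a fixed edge $e$, some $V_i$ has $e$ in its support, i.e.\ $e$ lies in that $\{1,2\}$-factor, and in the bipartite case I would select the alternating matching of its even-cycle component that uses $e$ to obtain a $1$-factor through $e$. For sufficiency I would average: if every edge $e$ lies in some factor $H_e$, let $\omega_e$ be the flow attached to $H_e$ and set $\omega=\frac{1}{|E|}\sum_{e\in E}\omega_e$; this is again a $1$-$[0,1]$-flow, and $\omega(f)\ge\frac{1}{|E|}\omega_f(f)>0$ for every $f$, so it is a $1$-$(0,1]$-flow. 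The main obstacle I anticipate is the bookkeeping around the face identification---verifying carefully that extreme points of the face $P$ are extreme points of $\Omega_{n,s,0}$, so that Corollary~\ref{katzthmcor} applies---together with the edge-preserving conversion from a $\{1,2\}$-factor to a $1$-factor in the bipartite situation; the remaining steps are routine linear-algebra and convexity arguments.
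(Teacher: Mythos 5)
Your proposal is correct, and for the non-bipartite case and the $(0,1]$ refinement it follows essentially the paper's route: both encode $1$-$[0,1]$-flows as symmetric doubly stochastic matrices with zero diagonal supported on $E$, invoke Corollary~\ref{katzthmcor} to identify extreme points with $\{1,2\}$-factors, and settle strict positivity by decomposing a given flow into extreme points (necessity) and averaging the flows $\hat\omega_H$ attached to factors covering each edge (sufficiency). Your explicit observation that $P$ is a face of $\Omega_{n,s,0}$ --- so that its extreme points are automatically extreme points of the whole polytope --- is in fact a cleaner justification of this step than the paper's terse appeal to the corollary. Where you genuinely diverge is the bipartite case: the paper switches frameworks there, representing a $1$-$[0,1]$-flow on a balanced bipartite graph $G=(V_1\cup V_2,E)$ as an $n\times n$ (not necessarily symmetric) doubly stochastic matrix indexed by the two sides and invoking Birkhoff's theorem, so that extreme points are permutation matrices, i.e.\ $1$-factors through prescribed edges. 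You instead stay in the symmetric Katz setting throughout and convert a $\{1,2\}$-factor into a $1$-factor combinatorially, using that every cycle component of a bipartite graph is even and hence splits into two alternating perfect matchings, choosing the matching through a prescribed edge $e$ for the $(0,1]$ statement. Both routes are valid; the paper's Birkhoff argument is more direct for bipartite graphs, while yours is more uniform --- a single matrix framework covers both cases and Birkhoff's theorem is not needed at all.
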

\begin{proof} 
	Suppose first that $G$ is not bipartite.    Assume that $n=|V|$. 
	View $V=[n]=\{1,\ldots,n\}$ and $E$ as a subset of all pairs $\{i,j\}$, where $i\ne j\in[n]$.
	Clearly, $G$ has $1$-$[0,1]$ if and only if there exists  $C=[c_{ij}]_{i,j=1}^n \in \Omega_{n,s,0}$,
	such that $c_{ij}=0$ if $(i,j)\not\in E$.  Corollary \ref{katzthmcor} yields that $C$ is a convex combination of $A=[a_{ij}]$ such that $a_{ij}=0$ if 
	$\{i,j\}\not\in E$.  Take such an extreme point.  Corollary \ref{katzthmcor} implies that $A$ corresponds to a $\{1,2\}$-factor of $G$.
	
	Vice versa, assume that $H$ is $\{1,2\}$-factor of $G$.  Let $\omega_H:H\to \{\frac{1}{2},1\}$ be the following flow on $H$.
	On each edge of the connected component $K_2$ of $H$ the value of $\omega_H$ is $1$.  On each edge of the cycle in $H$ the value of the 
	edge is $\frac{1}{2}$.   Extend this flow to $\hat\omega_{H}:E\to \{0,\frac{1}{2},1\}$ by letting $\hat\omega_H(e)=0$ for $e\not\in E\setminus E(H)$.
	Note that $H$ induces a unique extremal point $A(H)\in\Omega_{n,s,0}$.
	
	Assume that $G$ has $1$-$[0,1]$-flow.  Denote by $\Omega_{n,s,0}(G)$ all the symmetric doubly stochastic matrices corresponding to the $1$-$[0,1]$-flow
	on $G$.   Let $A_1,\ldots,A_M$ be all the extremal points of of $\Omega_{n,s,0}(G)$.  So $A_i=A(H_i)$ where $H_i$ is $\{1,2\}$-factor of $G$.
	So any $1$-$[0,1]$-flow is a convex combination of $A(H_1),\ldots,A(H_M)$.  Suppose there exists $1$-$(0,1]$-flow $\omega$ on $G$.  Let $e\in E$.
	Since $\omega(e)>0$ it follows that $e$ is contained in some $H_i$.  Vice versa, suppose $H_1,\ldots,H_M$ are all $M$ $\{1,2\}$-factors of $G$.
	Assume that each $e\in E$ is contained in some $H_i$.  Consider the $1$-flow $\omega=\frac{1}{M}\sum_{i=1}^M \hat\omega_{H_i}$.
	Then $\omega$ is $1$-$(0,1]$-flow.   
	
	Assume now that $G$ is a bipartite graph.  So $1$-$\R$-flow exists if and only if $G$ is balanced bipartite graph $G=(V_1\cup V_2,E)$.
	Let $V_1=\{u_1,\ldots,u_n\}, V_2=\{v_1,\ldots,v_n\}$.   So each edge $e\in E$ is of the form $\{u_i,v_j\}$.  Then $1$-$[0,1]$-flow $\omega$ on $G$ 
	corresponds to $A=[a_{ij}]_{i,j=1}^n\in\Omega_n$ where $a_{ij}=0$ if $\{u_i,v_j\}\not\in E$.  Recall Birkhoff's theorem which shows that $\cP_n$
	is the set of extreme points on $\Omega_n$.  
	
	Assume first that $G$ has $1$-$[0,1]$-flow $\omega$.  Then $A\in \Omega_n$ represents $\omega$.  So $A=\sum_{j=1}^M a_j P_j$ where each 
	$P_j\in\cP_n$, $a_j>0$ and $\sum_{j=1}^M a_j=1$.  Hence each $P_j$ represents $1$-factor of $G$.  Vice versa, assume that $G$ has $1$-factor $H$.  
	The arguments above imply that $\hat\omega_H$ is $1$-$[0,1]$-flow on $G$.  As in the non-bipartite graph we deduce that $G$ has $1$-$(0,1]$-flow 
	if and only if each edge is covered by some $1$-factor of $G$.
\end{proof}

The fundamental works of Tutte give necessary and sufficient conditions for existence $1$ and $\{1,2\}$ factors \cite{Tut47,Tut53}.

\begin{corollary}\label{exist01fdelge2} 
	Let $G$ be a graph and $\delta(G)\geq 2$. If $G$ has no even cycle,  then $G$ admits a $1$-$[0, 1]$-flow.
\end{corollary}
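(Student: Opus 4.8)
The plan is to reduce the statement to a purely combinatorial factor problem via Theorem~\ref{exist1nonfl}. We may assume $G$ is connected (otherwise we argue on each component separately and paste the flows together). Since $\delta(G)\ge 2$ the graph $G$ contains a cycle, and because $G$ has no even cycle this cycle is odd; hence $G$ is not bipartite. By part~1 of Theorem~\ref{exist1nonfl}, producing a $1$-$[0,1]$-flow is then equivalent to exhibiting a $\{1,2\}$-factor of $G$, i.e. a spanning subgraph each of whose components is a $K_2$ or a cycle. So the whole task becomes: construct such a factor.

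First I would pin down the block structure forced by the hypotheses. Since $G$ has no even cycle, every block of $G$ is a bridge or an odd cycle: a $2$-connected graph that is not a single cycle contains a theta subgraph (two vertices joined by three internally disjoint paths of lengths $a,b,c$), and as $a+b$, $b+c$, $c+a$ cannot all be odd (their sum $2(a+b+c)$ is even), such a subgraph always has an even cycle. Moreover $\delta(G)\ge 2$ forbids vertices of degree $1$, so no block is a pendant $K_2$; consequently every end-block (leaf of the block--cut tree) is an odd cycle.

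The factor is then built by induction on $|V(G)|$, peeling off an end-block. If $G$ is $2$-connected it is a single odd cycle, which is its own $\{1,2\}$-factor. Otherwise choose an end-block, an odd cycle $C=w v_1\cdots v_{2t}w$ with unique cut vertex $w$; the $v_i$ have degree $2$, so they must be covered using edges of $C$ only. Let $d=\deg_G(w)-2\ge 1$ count the edges at $w$ outside $C$. If $d\ge 2$, I delete $v_1,\dots,v_{2t}$: only $\deg(w)$ changes among surviving vertices and it stays $\ge 2$, so the reduced graph lies in the same class and by induction has a $\{1,2\}$-factor covering $w$; I adjoin the perfect matching $v_1v_2,\dots,v_{2t-1}v_{2t}$ of the even path $C-w$ to finish.

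The remaining case $d=1$ is where I expect the real difficulty: then $w$ meets the rest of $G$ through a single bridge, and naively deleting $C-w$ can create a vertex of degree $1$, so the inductive class is not obviously preserved. I would resolve this by following the bridge from $w$ along a maximal run of degree-$2$ vertices $w=z_0,z_1,\dots$ to the first vertex $z_r$ with $\deg(z_r)\ge 3$; such a $z_r$ exists because $\delta(G)\ge2$ prevents the run from ending at a leaf and the no-even-cycle condition prevents it from closing into a cycle. The key flexibility is that an odd cycle minus one vertex is an even path, hence perfectly matchable. Thus either ``take the whole cycle $C$ and match $z_1\cdots z_{r-1}$'' (valid when $r$ is odd) or ``match $C-w$ and cover the path $wz_1\cdots z_{r-1}$'' (valid when $r$ is even) covers all of $C\cup\{z_1,\dots,z_{r-1}\}$ while leaving $z_r$ uncovered, for either parity of $r$. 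Deleting this pendant structure drops $\deg(z_r)$ by exactly $1$, leaving it $\ge 2$, and keeps the graph connected, with no even cycle and minimum degree $\ge 2$; the induction hypothesis then completes the $\{1,2\}$-factor, and Theorem~\ref{exist1nonfl} (equivalently, weight $1$ on the $K_2$-components and $\tfrac12$ on the cycle-components) converts it into the desired $1$-$[0,1]$-flow.
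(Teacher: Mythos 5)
Your proposal is correct and follows essentially the same route as the paper's proof: reduce via Theorem~\ref{exist1nonfl} to finding a $\{1,2\}$-factor, observe that (no even cycle plus $\delta(G)\ge 2$) forces every block to be a bridge or an odd cycle, and induct by peeling off a leaf block, splitting on whether the cut vertex retains degree at least $2$ after removal, with the same parity argument along the path of degree-$2$ vertices to the next branch vertex in the bridge case. If anything, your case split ($d\ge 2$ versus $d=1$) is stated more carefully than the paper's dichotomy (``$v$ lies on another odd cycle'' versus ``$v$ has degree $3$ meeting a $K_2$-block''), which as literally written omits, e.g., a cut vertex incident to two bridges, though the paper's argument covers it the same way yours does.
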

\begin{proof} 
	We claim that $G=(V,E)$ has a $\{1,2\}$-factor. We prove this claim by induction on $n=|V(G)|$.   For $n=3$ the claim is trivial. 
	Consider the block decomposition of $G$.  It is well-known that every block of $G$ is $K_2$ or an odd cycle, see \cite{Wes00}. 
	Now, choose a leaf block of $G$. Obviously, it is an odd cycle $C$ on 2l+1 vertices.  Suppose first that $C$  has
	 a common vertex $v$, with another odd cycle $C'$.  Remove all vertices of $C$ except $v$.  The remaining graph $G'$ satisfies the assumption 
	 of the corollary. By the induction hypothesis $G'$ has a $\{1,2\}$-factor. The subgraph of $C$ on $2l$ vertices has  a $1$-factor.  Hence $G$ 
	 has a $\{1,2\}$-factor.
	
	It is left to discuss the case where the leaf cycle has one vertex $v$ of degree $3$ which is common with 
	a $K_2$-block.   Consider the the shortest path, $P$, between $v$ and another vertex of degree at least 3, say $w\not \in V(C)$. 
	Remove all the vertices on $C$ and the path $P$ except the vertex $w$.  The remaining graph $G'=(V',E')$ has a $\{1,2\}$-factor by induction.
	Consider now the subgraph $G_1$ of $G$ on the vertices $V\setminus V'$.  If the length of path is odd then $G_1$ has a $\{1,2\}$-factor
	consisting of $C$ and a matching, where the matching may be empty.  If $P$ is even then $G_1$ has a $1$-factor.  Hence $G$ has a $\{1,2\}$-factor.  
\end{proof}

\subsection{Existence of 1-sum $[-1,1]$-flows on graphs with $\delta(G)\ge 2$} 
In this section we assume that $G$ is connected graph with the minimal degree $\delta(G)$ at least $2$.
We believe that it would be interesting to characterize the graphs which admit a 1-sum $[-1,1]$-flow. This class clearly extends the class of graphs which 
have a  1-sum $[0,1]$-flow, but the inclusion of negative edge weights adds more flexibility.  Lemma \ref{Farkaslem} 
gives a necessary and sufficient conditions on the existence of these flows but at the moment we do not have a good interpretation of this result
in terms of structural properties of the graphs.

However, we can show that not all graphs have a 1-sum $[-1,1]$-flow, and in fact that given an integer $t$  there are graphs of arbitrarily high 
edge-connectivity which does not have a 1-sum $[-t, \infty)$-flow.

We start out with a simple example and then proceed with the generalization to higher connectivity.  
\begin{figure}[h!t]
	\begin{center}
		\includegraphics[width=\textwidth]{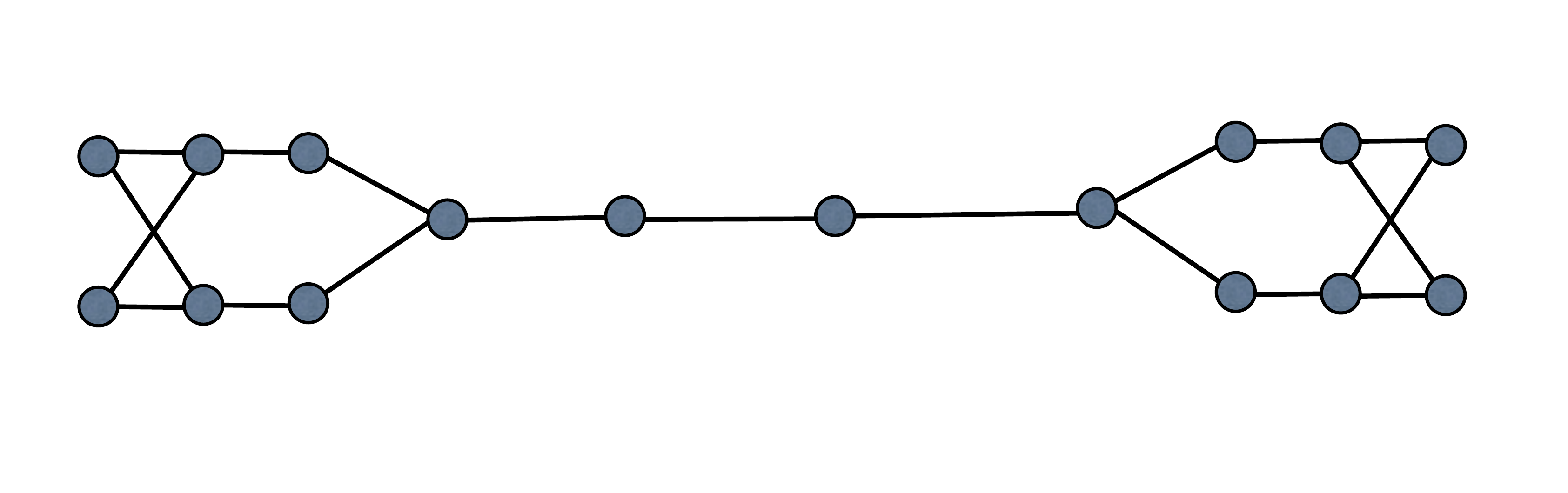}
		\caption{A bipartite graph with no 1-sum $[-1,1]$-flow.}
		\label{fig1}
	\end{center}
\end{figure} 
\begin{example}\label{examp1}  
	The graph $G$ on $16$ vertices with $\delta(G)=2,\Delta(G)=3$ given in Figure \ref{fig1}, 
	does not have $1$-sum $[-1,1]$ flow. A direct computation shows that the center edge of $G$ has weight 2 in all 1-sum flows and that  
	the narrowest range is given by a $1$-sum $[-1,2]$-flow.
\end{example}

\begin{example}\label{examp2} 
	For two positive integers $t$ and $s$,  there is an $s$-edge connected bipartite graph $G$
	 which admits a $1$-sum $\mathbb{R}$-flow but  admits no 1-sum $[-t, \infty)$-flow.

	Consider two disjoint copies of $K_{s,s(1+t)+1}$. Call the vertex parts of the first one by $(X,Y)$ and the second one by $(X',Y')$,
	where $|X|=|X'|=s$ and $|Y|=|Y'|=s(1+t)+1$. Choose an arbitrary vertex $v\in X$ and join $v$ to all vertices in $X'$ with the edges 	
	$e_1, \ldots, e_s$ and call  the resulting graph by $G$. We claim that $G$ is the desired graph. Clearly, $G$ is an $s$-edge connected 
	bipartite graph. Note that $G$ is a balanced bipartite graph and so it admits a $1$-sum $\mathbb{R}$-flow. By contradiction assume 
	that $f$ is a $1$-sum $[-t, \infty)$-flow of $G$. Then we have 
	$$s=\sum_{i=1}^{s} s(x_i)=\sum_{1\leq i\leq s, 1\leq j\leq s(1+t)+1}f(x_iy_j)+ \sum_{i=1}^{s}f(e_i)=s(1+t)+1+\sum_{i=1}^{s}f(e_i),$$ 
	where $s(x_i)$ is the sum values of all edges incident with $x_i$. This implies that $\sum_{i=1}^{s}f(e_i)=-st-1$. We know that for each 
	$i$, $f(e_i)\geq -t$, a contradiction.
\end{example}

\begin{problem} 
	Characterize the graphs which admit a 1-sum $[-1,1]$-flow.
\end{problem}

\section{$1$-sum $L$-flows when $L$ is not an interval}
As mentioned in the introduction the problem of finding a $1$-sum $L$-flow when $L$ is an interval is a linear programming problem. As soon as $L$ is not an 
interval we are no longer working with a convex problem and many of the tools we have  used so far do not apply.  Nonetheless we shall prove some results 
for two cases of this type. First we will consider the real line with the single point 0 removed, a second we will look at the case when $L$ consist of just a finite 
list of  real numbers.

\subsection{$1$-$\R^*$-flows}

In \cite{akb} the following question was proposed.
\begin{question}
Determine a necessary and sufficient condition under which a bipartite graph admits a
$1$-sum $\mathbb{R}^*$-flow or a $1$-sum $\mathbb{Z}^*$-flow.
\end{question}
In this section we give an answer to this question.

It is not hard to see that if a graph $G$ admits a $1$-sum $\mathbb{Z}$-flow, then the order of $G$ should be even.
In [4] it has been proved that a connected  bipartite graph admits a $1$-sum $\mathbb{R}$-flow if and only if it is balanced.

\begin{theorem} 
	Let $G$ be a connected balanced bipartite graph. Then $G$ admits a $1$-sum $\mathbb{R}^*$-flow if and only if the removing of every cut edge 
	of $G$ does not make a balanced bipartite connected component.
\end{theorem}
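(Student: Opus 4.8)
The plan is to exploit the affine structure of the solution set. Since $G$ is connected, balanced and bipartite, the balance condition \eqref{bipgamcond} holds for $\gam=\1$, so by part \emph{2.} of Lemma \ref{existgamRflow} a $1$-sum $\R$-flow $\omeg_0$ exists, and the set of all $1$-sum $\R$-flows is the affine space $\mathcal F=\omeg_0+\ker A(G)$, of dimension $m-\rank A(G)=m-n+1$. A $1$-sum $\R^*$-flow is precisely a point of $\mathcal F$ with no vanishing coordinate. First I would invoke the elementary fact that over the infinite field $\R$ an affine space is never covered by finitely many proper affine subspaces; hence $\mathcal F$ contains a point avoiding all the coordinate hyperplanes $\{\omega(e)=0\}$ \emph{unless}, for some edge $e$, the linear functional $\omeg\mapsto\omega(e)$ is identically $0$ on $\mathcal F$. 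Thus the existence of a $1$-sum $\R^*$-flow is equivalent to the statement that no edge is forced to the value $0$ in every $1$-sum flow.

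Second, I would determine which edges take a constant value on $\mathcal F$. The key observation is that $\ker A(G)$ is spanned by the alternating $\pm1$ vectors of the cycles of $G$ (which are all even, $G$ being bipartite); concretely, the fundamental cycles with respect to a fixed spanning tree give $m-n+1$ linearly independent such vectors, each supported on a single cycle, so they form a basis. If $e$ lies on a cycle $C$ with corresponding vector $z_C$, then $\omeg_0+t\,z_C\in\mathcal F$ and $(\omeg_0+t\,z_C)(e)=\omega_0(e)+t\,z_C(e)$ varies since $z_C(e)=\pm1$; conversely every cycle vector vanishes on a bridge, so $\omega(e)$ is constant on $\mathcal F$ exactly when $e$ is a cut edge. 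Hence only cut edges can be forced to $0$.

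Third, for a cut edge $e=\{u,w\}$ I would compute its forced value. Let $H_u$ be the component of $G-e$ containing $u$, and let $V_1,V_2$ be the parts of $G$. Summing $\gamma$ over $V(H_u)$ with sign $+1$ on $V_1\cap V(H_u)$ and $-1$ on $V_2\cap V(H_u)$, every edge internal to $H_u$ joins the two parts and so contributes $+\omega(e')-\omega(e')=0$, leaving only the bridge; thus this signed sum equals $\pm\omega(e)$ according to the part of $u$. Since $\gamma\equiv 1$, the same sum equals $|V_1\cap V(H_u)|-|V_2\cap V(H_u)|$ up to sign, so $\omega(e)=0$ if and only if $H_u$ is balanced. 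By the global balance $|V_1|=|V_2|$ one checks $|V_1\cap V(H_u)|-|V_2\cap V(H_u)|=-\bigl(|V_1\cap V(H_w)|-|V_2\cap V(H_w)|\bigr)$, so $H_u$ is balanced if and only if $H_w$ is. Combining the three steps, a $1$-sum $\R^*$-flow fails to exist exactly when some cut edge, upon deletion, leaves a balanced bipartite connected component, which is the asserted equivalence.

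The main obstacle I anticipate is the second step: rigorously establishing that $\ker A(G)$ is spanned by the even-cycle alternating vectors, and therefore that constancy of $\omega(e)$ on $\mathcal F$ is equivalent to $e$ being a cut edge. The remaining ingredients are routine: the covering lemma for affine spaces over an infinite field, and the signed-summation bookkeeping of step three.
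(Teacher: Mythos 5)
Your proof is correct and, at bottom, runs on the same three ingredients as the paper's: the solution set is a translate of the space of $0$-sum flows (which is $\ker A(G)$, the space the paper's proof calls $V$), a finite union of proper affine subspaces over the infinite field $\R$ cannot cover an affine space, and alternating $\pm 1$ vectors on even cycles show that only cut edges can carry a forced value. But your organization genuinely streamlines the two most laborious steps of the paper's argument. Where the paper splits into cases according to whether $V\subset\bigcup_i W_i$, extracts a $0$-sum flow $\alpha$ nonvanishing on all coordinates in $J$, and then chooses a suitable scalar $a$ so that $\omeg+a\alpha$ is nowhere zero, your single application of the covering lemma to the affine space $\mathcal{F}$ accomplishes all of this at once. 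More substantively, to see that the forced value on a cut edge $e_j$ is nonzero, the paper constructs an explicit $1$-sum flow on $G$, invoking Theorem 3 of \cite{akb} on each of the two unbalanced components of $G\setminus\{e_j\}$ with adjusted demands at the endpoints of $e_j$ and verifying $s(v)=s(v')$; your signed-summation identity instead pins the value of a bridge to $\pm\bigl(|V_1\cap V(H_u)|-|V_2\cap V(H_u)|\bigr)$ in \emph{every} $1$-sum flow, with no construction at all --- it is exactly the paper's ``only if'' computation, reused for the converse. Finally, the obstacle you flag is a non-issue twice over: the fundamental-cycle argument you sketch does prove the spanning claim (each fundamental cycle of a spanning tree is even since $G$ is bipartite, contains a distinct non-tree edge, and there are $m-n+1$ of them, which equals $\dim\ker A(G)$ because $\rank A(G)=n-1$); but your proof never actually needs the full claim, since ``non-bridge $\Rightarrow$ nonconstant'' uses only the single alternating vector of one even cycle through the edge, and constancy on bridges follows for free from your signed-sum identity rather than from the structure of $\ker A(G)$.
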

\begin{proof} 
	First assume that  $G$ admits a $1$-sum $\mathbb{R}^*$-flow, say $\omega$, and $e$ is a cut edge its removing makes a balanced bipartite 
	connected component. Call this component by $H$. Assume that $(X,Y)$ be two vertex parts of $H$ and $|X|=|Y|$. We have 
	$$|X|=\sum_{v\in X}s(v)=\sum_{v\in Y}s(v)+\omega(e)=|Y|+\omega(e),$$
	where $s(v)$ denotes the sum of the values of all incident edges to $v$. This implies that $\omega(e)=0$, a contradiction.

	Now, assume that the  removing of every cut edge does not make a balanced bipartite connected component. Let $E(G)=\{e_1, \ldots ,e_m\}$ and 
	for $i=1, \ldots ,m$, $W_i\subset \mathbb{R}^m $ is the set of all $0$-sum flows of $G$ in which the value of $e_i$ are zero.  Let $V\subset \mathbb{R}^m $ 
	be the set of all $0$-sum flows of $G$. Clearly, $V$ and $W_i$ are vector spaces over $\mathbb{R}$. By Theorem 3 of [4], there is a $1$-sum  
	$\mathbb{R}$-flow for $G$.

	If $V \not  \subset \bigcup_{i=1}^m W_i$, then there exists a $0$-sum  $\mathbb{R}^*$-flow $\omega'$ of $G$. It is obvious that there exists a suitable 
	real number $a$ such that $\omega+a\omega'$ is a  $1$-sum  $\mathbb{R}^*$-flow and we are done.

	Now, assume that there exists $J\subseteq \{1, \ldots, m\}$ such that for every $j\in J$, $V\neq W_j$ and for any $j\in \{1,\ldots, m\}\setminus J$,
	$V= W_j$. Since $\mathbb{R}$ is infinite, it is well-known that $V\not \subset \bigcup_{j\in J}W_j$. So, there exists a vector $\alpha \in V$,
	such that the $j$th component of $\alpha$ is non-zero for every $j\in J$. Now, let $j\in \{1,\ldots, m\}\setminus J$. If $e_j$ is not a cut edge, then it is contained 
	in an even cycle. If we assign $1$ and $-1$ to all edges of this cycle, alternatively and assign $0$ to all other edges of $G$, we obtain a vector in 
	$V\setminus W_j$, a contradiction. Hence $e_j$ is a cut edge of $G$. By assumption $G\setminus \{e_j\}$ has a non-balanced bipartite component $H$. Note that
	since $G$ is balanced and $H$ is not balanced, the other component, $H'$, is not balanced too. Let $H=(A,B)$ be two vertex parts of $H$ and $|A|<|B|$. Without 
	loss of generality assume that $v\in A$ and $e_j$ is incident with $v$. Assign 1 to every vertex in $(A\setminus \{v\})\cup B$ and assign $|B|-|A|+1$ to $v$. Then 
	by Theorem 3 of \cite{akb}, $H$ admits a flow such that $s(v)=|B|-|A|+1$ and $s(x)=1$, for each $x\in V(H)\setminus \{v\}$. Similarly, if $H'=(A',B')$ and $|A'|<|B'|$ 
	and $e_j$ is incident with $v'\in A'$, then there exists a flow for $H'$ such that  $s(v')=|B'|-|A'|+1$ and $s(x)=1$, for each $x\in V(H')\setminus \{v'\}$. Since $G$ is 
	balanced, we have $|A|+|B'|=|A'|+|B|$. This yields that  $s(v)=s(v')$. Now, assign $|A|-|B|$ to $e_j$ to obtain a $1$-sum flow for $e_j$. Note that since $V=W_j$, in 
	any $1$-sum  $\mathbb{R}$-flow  of $G$, the value of $e_j$ should be $|A|-|B|$, which is non-zero. It is not hard to see that there exists a suitable $a$ such that 
	$\omega+a\alpha$ is a $1$-sum  $\mathbb{R}^*$-flow of $G$, as desired.
\end{proof}
Let $G$ be a $2$-edge connected bipartite graph and $a<0$ and $b>0$ be two real numbers and $L=(a,b)$. Then $G$ admits a $1$-sum $L$-flow if and only if 
$G$ admits a $1$-sum $L^*$-flow. To see this, by a theorem, $G$ has a $0$-sum $\mathbb{R}^*$-flow, say $\omega'$. Let $\omega$ be a  $1$-sum $L$-flow of 
$G$. Then if $\epsilon$ is small enough,  $\omega+\epsilon \omega'$ is a  $1$-sum $\mathbb{R}^*$-flow of $G$.

There are many possible variations of these questions
\begin{question} 
	What is the difference between the graphs which admit a 1-sum $[-1,1]$-flow, a 1-sum $(-1,1)$-flow and a  1-sum $[-1,1]^*$-flow?
\end{question}

\subsection{$1$-sum flows with a finite list}
We can also let $L$ be a finite list of allowed values, bringing us closer to the situation in the classical study of nowhere-zero flows of graphs.

Consider $\gam$-flow on a connected $G$.  Assume that $\gam\in\C^V$ and consider $\gam$-$\C$-flow on $G$.  Lemma \ref{existgamRflow} applies also 
to $\C$-flows. Let $L=\{t_1,\ldots,t_k\}$ be a subset of $\C$ of cardinality $k$.  We now interested in the problem when there exists $\gam$-$L$-flow.
This is a problem in algebraic geometry.   Define
\[P_L(z):=\prod_{j=1}^k (z-t_j).\]
So, what we are asking for is a solution of  the linear system \eqref{sysomeggam} and the polynomial conditions
\[P_L(\omega(e))=0 \textrm{ for each } e\in E.\]
Since $r=\rank A(G)\in\{n-1,n\}$ we have an overdetermined system of equations with $m-r$ parameters which will have to satisfy 
$m$ polynomial conditions.  For relatively small graphs the existence of a solution can be determined by standard computer algebraic 
software like Mathematica.

In what follows we discuss very special flows and $L$ using graph theory results.
\begin{theorem} \label{regular } 
	Let $k$ be a positive integer and $G$ be a connected $k$-regular graph of order $n$. Then the following hold:
	\begin{enumerate}
		\item If $k$ is odd, then $G$ admits a $1$-sum $\{-1, 0, 1\}$-flow.
		\item If $k\equiv 2\, (mod\, 4)$ and $n$ is even, then $G$ admits a $1$-sum  $\{-1, 0, 1\}$-flow.
	\end{enumerate}
\end{theorem}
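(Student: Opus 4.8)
The plan is to reduce the whole problem to the existence of a single integer factor in an auxiliary multigraph. Observe that an assignment $\omega\colon E\to\{-1,0,1\}$ with $\sum_{e\ni v}\omega(e)=1$ for all $v$ is equivalent, via the substitution $\rho(e)=1-\omega(e)$, to a function $\rho\colon E\to\{0,1,2\}$ with $\sum_{e\ni v}\rho(e)=\deg(v)-1=k-1$ at every vertex. If $2G$ denotes the multigraph obtained from $G$ by doubling every edge, then such a $\rho$ is exactly a spanning $(k-1)$-regular sub-multigraph (a $(k-1)$-factor) of the $2k$-regular multigraph $2G$: the two parallel copies of $e$ record whether $\rho(e)$ is $0$, $1$, or $2$. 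So it suffices to produce a $(k-1)$-factor of $2G$ and then set $\omega(e)=1-\rho(e)$, which gives vertex sums $k-(k-1)=1$ automatically. The advantage of passing to $2G$ is that it is \emph{even}-regular and connected, hence bridgeless, regardless of whether $G$ itself has bridges.

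For part~1, $k$ is odd, so $k-1$ is even and the factor comes for free. By Petersen's $2$-factorization theorem (valid for even-regular multigraphs) write $2G=\Phi_1\cup\cdots\cup\Phi_k$ and take $\Psi=\Phi_1\cup\cdots\cup\Phi_{(k-1)/2}$. Then $\Psi$ is a $(k-1)$-factor of $2G$, $\rho(e)\in\{0,1,2\}$ is its edge-multiplicity, and $\omega=1-\rho$ is the desired $1$-sum $\{-1,0,1\}$-flow. (For $k=1$ this degenerates to $\Psi=\emptyset$, $\omega\equiv1$ on $G=K_2$.) No hypothesis beyond $k$ odd is used, which is consistent with the fact that $k$ odd already forces $n$ even.

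For part~2, $k\equiv2\pmod{4}$ is even, so now $k-1$ is \emph{odd}, and the easy route breaks down: a $2$-factorization only yields even-regular factors, and peeling off a $1$-factor to build an odd factor would require a perfect matching of $2G$, which need not exist (a cut vertex of $G$ whose removal leaves several odd-order components obstructs it). I would instead establish the $(k-1)$-factor of $2G$ directly from Tutte's $f$-factor theorem with $f\equiv k-1$, whose solvability is governed by a parity count over the components of $2G-(S\cup T)$. The hypotheses are exactly what make this count work: $n$ even is forced since $\sum_v\gamma(v)=2\sum_e\omega(e)$ must be even, while the residue $k\equiv2\pmod{4}$ (rather than $k\equiv0$) controls the parity of $(k-1)|C|$ over odd-order components $C$, which is precisely the quantity deciding whether a component is ``odd'' in Tutte's sense. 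The complementary difficulty—why $k\equiv0\pmod{4}$ is excluded—should surface as exactly the case in which this parity count can fail.

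The step I expect to be the main obstacle is verifying Tutte's inequality in part~2, equivalently handling the cut structure of $G$. The clean way to organize this is structural: since $2G$ is bridgeless, the only obstructions arise from vertex (or larger) cuts producing odd-order pieces, and across any such cut the total flow is forced to be odd on each piece by the parity identity $\sum_{u\in C}\gamma(u)=|C|+(\text{flow across the cut})$. One then argues that these forced odd values can be combined to give the required sum $1$ at the cut, and recurses into the smaller $2$-edge-connected pieces, where a $(k-1)$-factor can be produced (for instance from an all-even-cycle $2$-factor, which yields a perfect matching, or by a direct Petersen-based construction). Making this recursion uniform in $k$, and confirming that $n$ even together with $k\equiv2\pmod{4}$ always supplies a consistent parity at every cut, is the crux of the argument.
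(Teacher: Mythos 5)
Your part~1 is correct and complete, and it takes a genuinely different route from the paper. Your substitution $\rho(e)=1-\omega(e)$ reformulates the problem as finding a $(k-1)$-factor of the doubled multigraph $2G$ with edge multiplicities in $\{0,1,2\}$, and for $k$ odd Petersen's $2$-factorization of the $2k$-regular multigraph $2G$ hands you such a factor as a union of $(k-1)/2$ two-factors. The paper instead passes to the bipartite double cover $H$ of $G$ (parts $\{x_1,\dots,x_n\}$, $\{y_1,\dots,y_n\}$, with $x_iy_j\in E(H)$ iff $ij\in E(G)$), decomposes the $k$-regular bipartite $H$ into $k$ perfect matchings by K\"onig, weights the matchings alternately $\pm\frac{1}{2}$ so that every vertex of $H$ receives value $\frac{1}{2}$ (here $k$ odd is used), and folds back to $G$ via $b_{ij}=a_{ij}+a_{ji}\in\{-1,0,1\}$, giving vertex sums $\frac{1}{2}+\frac{1}{2}=1$. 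The two arguments are close cousins---the bipartite double cover plus K\"onig is essentially the standard proof of Petersen's theorem---but your normal form (a $\{0,1,2\}$-valued $(k-1)$-factor of $2G$) is a clean and correct repackaging.

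Part~2, however, is a plan rather than a proof, and you flag this yourself: verifying Tutte's $f$-factor condition for $f\equiv k-1$ on $2G$, or making the cut-recursion consistent, is precisely the content of the statement, and nothing in your sketch establishes it. Bridgelessness of $2G$ does not by itself yield odd regular factors, and your proposed recursion into $2$-edge-connected pieces lands in non-regular graphs, where the part-1 machinery (and your own reduction, which uses $\deg(v)=k$ at every vertex) no longer applies, so the induction is not self-sustaining as stated. The paper closes exactly this gap by citation rather than by Tutte-type counting: since $k$ is even, $G$ is Eulerian and hence has no cut edge, and by Theorem~3.10, Part~(ii) of Akiyama--Kano, a $2$-edge-connected $k$-regular graph with $k\equiv 2 \pmod 4$ and $n$ even decomposes into two spanning $\frac{k}{2}$-regular factors $G_1,G_2$ with $\frac{k}{2}$ odd (the ``$(2k+1)$-regular'' in the paper is a slip for this); part~1 applied to $G_1$, together with the value $0$ on all edges of $G_2$, finishes the proof, and this is where both hypotheses $n$ even and $k\equiv 2\pmod 4$ (as opposed to $k\equiv 0\pmod 4$, which the paper leaves open) actually enter. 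If you wish to keep your $2G$ formulation, the missing ingredient is exactly such an odd-regular-factor theorem for even-regular $2$-edge-connected graphs of even order; deriving it from scratch via Tutte's condition is a substantial piece of work, not a routine verification, so as it stands your part~2 is unproven.
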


\begin{proof} 
	\begin{enumerate}
		\item First we assign a  bipartite graph $H$ to $G$. Suppose that
			$V(G)=\{1,\dots,n\}$ and let $H$ be a bipartite graph with two
			parts $\{x_1,\dots,x_n\}$ and $\{y_1,\dots,y_n\}$. Join $x_i$ and
			$y_j$ if and only if two vertices $i$ and $j$ are adjacent in $G$.
	
			Since $H$ is a $k$-regular bipartite graph, the edges of $H$ can be decomposed into $k$, $1$-factors, $F_1, \ldots , F_k$.
			Assign $\frac{(-1)^{i-1}}{2}$ to all edges of $F_i$, $1\leq i\leq n$.
			So, for each vertex $v\in V(H)$, we have $s(v)=\frac{1}{2}$.
			For two adjacent
			vertices $v_i$ and $v_j$ in $G$,  assume $e_{ij}$ is the edge between $v_i$ and $v_j$. Let $a_{ij}$ be the value of the edge $x_iy_j$, $1\leq i,j\leq n$.
			Assign the value $b_{ij}=a_{ij}+a_{ji}$ to $e_{ij}$. By our assumption,
			$b_{ij}\in\{-1, 0,1\}$. We have
			$$\sum_{y_j\in N(x_i)}a_{ij}=0\ ,\ \sum_{x_j\in N(y_i)}a_{ji}=0.$$
			It is not hard to see that using this assignment we find a $1$-sum $\{-1,0,1\}$-flow for $G$.

		 \item Since $k$ is even, $G$ is an Eulerian graph and so it is $2$-edge connected. Now, by Theorem 3.10, Part (ii) of \cite{kann},
			 the edges of $G$ can be decomposed into two spanning $(2k+1)$-regular graphs $G_1$ and $G_2$. By Part (i), $G_1$ has
			 a  $1$-sum $\{-1,0,1\}$-flow. Now, assign $0$ to all edges of $G_2$. So, $G$ admits a $1$-sum $\{-1,0,1\}$-flow, as desired.
	\end{enumerate}
\end{proof}

\begin{question} 
	Let $k$ be a positive integer divisible by $4$. Is it true that every connected $k$-regular graph of even order admits a 1-sum $\{-1,0,1\}$-flow? 
\end{question} 

\begin{problem} 
	Characterize the graphs which admit a 1-sum $\{-1,0,1\}$-flow.
\end{problem}

Next we recall the following interesting result.
\begin{theorem}\label{kano}{ \em \cite{kano}}
	\textit{Let $r \geq 3$ be an odd integer and let $k$ be an integer such that $1 \leq k \leq \frac{2r}{3}$. Then every $r$-regular graph
	has a $[k-1, k]$-factor each component of which is regular.}
\end{theorem}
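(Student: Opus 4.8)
The plan is to separate the statement into two parts of very different difficulty: the mere existence of a $[k-1,k]$-factor, and the structural requirement that every component of it be regular. First I would reduce to the case that $G$ is connected, since the components of an $r$-regular graph are themselves $r$-regular and a disjoint union of factors of the desired type is again a factor of the desired type. It is also convenient to record the hypothesis $k\le\frac{2r}{3}$ in the equivalent form $2(r-k)\ge k$; reading $r-k$ as the ``leftover'' degree available at each vertex outside the factor, this says that twice the leftover degree is at least $k$, which is precisely the slack I will need later.

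For existence I would invoke the Lov\'asz $(g,f)$-factor theorem with the constant bounds $g\equiv k-1$ and $f\equiv k$. Because $g<f$ strictly at every vertex, no component of $G-S-T$ can be a parity-barrier component (those require $g\equiv f$ on the component), so the Lov\'asz deficiency reduces to requiring, for all disjoint $S,T\subseteq V$, that
\[
k|S|+\sum_{x\in T}\bigl(d_{G-S}(x)-(k-1)\bigr)\ge 0 .
\]
Using $d_{G-S}(x)=r-e_G(x,S)$ this equals $k|S|+(r-k+1)|T|-e_G(S,T)$, and bounding $e_G(S,T)\le r\min(|S|,|T|)$ shows it is always nonnegative. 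Hence \emph{every} $r$-regular graph already has a $[k-1,k]$-factor, with no hypothesis on $k$ needed. This confirms that the content of the theorem lies entirely in making the components regular, and that the bound $k\le\frac{2r}{3}$ must enter only at that stage.

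For the regularity of components I would argue by an extremal/augmentation method. Call a vertex \emph{deficient} in a $[k-1,k]$-factor $F$ if it has degree $k-1$, and call a component of $F$ \emph{mixed} if it contains both deficient and non-deficient vertices; the goal is a factor with no mixed component. I would choose $F$ extremally, for instance minimizing the number of mixed components, and assume toward a contradiction that some component $C$ is mixed. Within $C$ one can pair up like-type vertices and toggle $F$-alternating paths between them: an alternating path starting and ending with non-$F$ edges raises its two endpoints from $k-1$ to $k$ while fixing all interior degrees, and one starting and ending with $F$ edges lowers two full vertices to $k-1$. Since the parity of the number of deficient vertices of $C$ is forced by $\sum_{v\in C}\deg_F(v)\equiv 0\pmod 2$, at least one of the two normalizations (all-full or all-deficient) is admissible; this is exactly why allowing the two degrees $k-1$ and $k$ is essential. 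The hypothesis contributes here: each vertex has at least $r-k$ incident non-factor edges, and $2(r-k)\ge k$ guarantees enough room to build the required disjoint alternating paths, after which the extremal quantity strictly drops, a contradiction.

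The main obstacle I anticipate is precisely this last step: making the local regularization global. An alternating path used to repair $C$ can leave $C$ through non-factor edges and alter degrees in other components, thereby creating new mixed components, so the augmentation must be organized to stay confined to a single component (e.g.\ by closing alternating trails into cycles inside $C$, or by a parity-corrected rerouting) and so that the chosen potential genuinely decreases. Carrying this out will require a careful case analysis on the parity of $|C|$ against the parities of $k$ and $k-1$, combined with the counting inequality from $2(r-k)\ge k$ certifying that sufficiently many internally disjoint alternating paths exist. I expect the heart of the proof to be a lemma asserting that a single mixed component can always be repaired using only edges incident to that component, from which the theorem follows by induction on the number of mixed components.
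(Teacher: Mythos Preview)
The paper does not prove this theorem at all: it is quoted verbatim from Kano's 1986 paper and used as a black box in the subsequent results. So there is no ``paper's own proof'' to compare against; any evaluation must be of your argument on its own merits.

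Your existence step is correct. With $g\equiv k-1$ and $f\equiv k$ the full-component term in Lov\'asz's $(g,f)$-factor theorem vanishes, and the inequality $k|S|+(r-k+1)|T|-e_G(S,T)\ge 0$ follows from $e_G(S,T)\le r\min(|S|,|T|)$ exactly as you say. You are also right that this step uses only $1\le k\le r$, not $k\le\tfrac{2r}{3}$.

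The regularity step, however, is not a proof but a wish list, and you have yourself located the gap. Your augmentation moves deficient vertices to full (or vice versa) along $F$-alternating paths, but the non-$F$ edges of such a path are edges of $G$, not of the $F$-component $C$; toggling them changes $F$-degrees at vertices outside $C$ and can create new mixed components, so your extremal quantity need not decrease. The suggested remedies (``closing alternating trails into cycles inside $C$'', ``parity-corrected rerouting'') are not arguments, and the role you assign to $2(r-k)\ge k$ --- that it ``guarantees enough room to build the required disjoint alternating paths'' --- is asserted rather than shown. Note also that your parity observation only says that, in a mixed component, one of the two counts (deficient or full vertices) is even; it does not by itself certify that the corresponding normalization can actually be carried out inside $C$. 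As it stands, the proposal establishes existence of a $[k-1,k]$-factor but does not establish the component-regularity conclusion; to complete it you would need a genuine lemma showing that a mixed component can be repaired without disturbing the rest of the factor, and that is precisely where the hypothesis $k\le\tfrac{2r}{3}$ must do real work.
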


\begin{theorem}
	Let $r\geq 5$ be an odd positive integer. Then every $r$-regular graph admits a $1$-sum $3$-flow. Moreover, every $2$-edge connected 
	$r$-regular graph admits a $1$-sum $2$-flow.
\end{theorem}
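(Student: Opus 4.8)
The plan is to recast both statements as factor problems and feed them into the regular-factor theorem (Theorem~\ref{kano}). First observe that a $1$-sum $\{-1,1\}$-flow on an $r$-regular graph is exactly the data of an $\frac{r+1}{2}$-factor: if $p$ edges at a vertex carry $+1$ and $q$ carry $-1$ then $p+q=r$ and $p-q=1$, so $p=\frac{r+1}{2}$, and the $+1$-edges form an $\frac{r+1}{2}$-factor; conversely, weighting such a factor by $+1$ and its complement by $-1$ gives a $1$-sum $\{-1,1\}$-flow. Thus the ``moreover'' part is equivalent to producing an $\frac{r+1}{2}$-factor, and since a $\{-1,1\}$-flow is in particular a $1$-sum $3$-flow, the first part only needs real attention on graphs that fail to have such an exact factor; there the extra values $\pm2$ should buy the needed slack.

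For the first part I would apply Theorem~\ref{kano} with $k=\frac{r+1}{2}$, a legal choice because $\frac{r+1}{2}$ is an integer (as $r$ is odd) and $\frac{r+1}{2}\le\frac{2r}{3}$ for all $r\ge3$. This yields a $[\frac{r-1}{2},\frac{r+1}{2}]$-factor $F$ each component of which is regular, hence either $\frac{r+1}{2}$-regular or $\frac{r-1}{2}$-regular. Weight every edge of $F$ by $+1$ and every edge outside $F$ by $-1$; the value at a vertex $v$ is then $2\deg_F(v)-r$, equal to $+1$ on the high-degree components and $-1$ on the low-degree ones. It remains to repair the low-degree components. On such a component $C$, which is $\frac{r-1}{2}$-regular, I want to raise the internal vertex sum from $\frac{r-1}{2}$ to $\frac{r+3}{2}$, so that together with the $-\frac{r+1}{2}$ contributed by the $-1$-weighted edges leaving $C$ each vertex of $C$ totals $1$. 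Keeping all edges of $C$ at $+1$ and promoting a $2$-regular spanning subgraph of $C$ to value $2$ raises each vertex sum by exactly $2$; the promoted value $2$ is admissible precisely because $\frac{r+3}{2}\le r-1$, i.e. because $r\ge5$, which is where the hypothesis enters.

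For the ``moreover'' part the repair must be carried out without ever using the value $2$, i.e. staying in $\{-1,1\}$, which by the reformulation above is the same as upgrading the $[\frac{r-1}{2},\frac{r+1}{2}]$-factor $F$ to an exact $\frac{r+1}{2}$-factor. Here $2$-edge-connectivity should be decisive: it forbids the cuts that force low-degree components and supplies a cycle through every edge, so the $-1$-vertices can be paired and corrected along even closed walks, flipping a $-1$ edge joining two low-degree vertices up to $+1$ and thereby lifting both to sum $1$ while staying in $\{-1,1\}$.

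The step I expect to be the main obstacle is exactly this repair. When $\frac{r-1}{2}$ is even the component $C$ is even-regular and Petersen's theorem immediately supplies the required $2$-factor; but when $r\equiv 3\pmod 4$ the component $C$ is odd-regular and may contain no $2$-factor at all (for instance if it has a bridge), so one must instead realize the target vertex sum $\frac{r+3}{2}$ by a genuinely mixed $\{-2,-1,1,2\}$-assignment, exploiting that $C$ is itself odd-regular and hence, by the $\{-1,0,1\}$-flow result for regular graphs established above, carries useful auxiliary flows. For the sharper $\{-1,1\}$ conclusion of the second part the analogous difficulty is to prove that $2$-edge-connectivity alone eliminates every low-degree component (equivalently, verifies the Tutte--Belck--Gallai condition for an $\frac{r+1}{2}$-factor); this is the crux, and it is where a careful augmenting-walk argument, rather than a one-line appeal to Theorem~\ref{kano}, will be required.
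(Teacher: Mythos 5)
Your reduction is set up correctly (Kano's theorem with $k=\frac{r+1}{2}$ is exactly the paper's choice, writing $r=2t+1$ and $k=t+1$), but the obstacle you flag at the end is a genuine gap, not a technicality, and your proposed escapes do not work as stated: with your fixed sign convention ($+1$ on the factor $F$, $-1$ off it) the deficient components are always the $\frac{r-1}{2}$-regular ones, and when $r\equiv 3\pmod 4$ these are odd-regular and may have no $2$-factor, so the promotion step has nothing to promote; the ``mixed $\{-2,-1,1,2\}$-assignment'' you gesture at is never constructed, and the $1$-sum $\{-1,0,1\}$-flow from the earlier theorem does not combine with the all-ones weighting to give admissible values (e.g.\ $\mathbf{1}+f$ can produce $0$, and $\mathbf{1}+2f$ can produce $3$). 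The idea you are missing, and which the paper uses, is that the sign convention is a free parameter to be chosen by the parity of $t$ so that the repair always lands in an \emph{even}-regular subgraph, where Petersen's theorem never fails. When $t$ is even the paper does what you do: $-1$ off the factor, $+1$ on the $(t+1)$-regular union $K$ (each such vertex already sums to $(t+1)-t=1$), and in the $t$-regular union $H$ a Petersen $2$-factor $L$ is promoted to weight $2$, giving $(t-2)+4-(t+1)=1$. When $t$ is odd the paper flips the roles: $+1$ off the factor and $-1$ on all of $H$, so vertices of $H$ sum to $-t+(r-t)=1$ with no internal structure needed at all, and the repair moves to $K$, which is $(t+1)$-regular with $t+1$ even $\ge 4$: Petersen extracts a $4$-regular factor $L$, which (being a union of two $2$-factors) carries a $-2$-sum $3$-flow ($-2$ on one $2$-factor, $+1$ on the other), and the remaining edges of $K$ get $-1$; a vertex of $K$ then sums to $t-2-\bigl((t+1)-4\bigr)=1$. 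The case $r=5$ is handled separately via a $[2,3]$-factor (weight $2$ on $2$-regular components, $1$ on $3$-regular ones, $-1$ outside).

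For the ``moreover'' part your observation that a $1$-sum $2$-flow on an $r$-regular graph is equivalent to an $\frac{r+1}{2}$-factor is correct and worth keeping, but your proposal stops at announcing that $2$-edge-connectivity ``should be decisive''; verifying the Tutte-type factor condition here is a nontrivial theorem, not a routine augmenting-walk argument, and you prove nothing. The paper does not prove it from scratch either: it derives this part as an immediate consequence of Theorem 8 of \cite{zar}. So as written, both the $r\equiv 3\pmod 4$ repair and the entire second assertion are open in your proposal.
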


\begin{proof} 
	First let $r=5$. By Theorem \ref{kano}, $G$ has a $[2,3]$-factor $H$ whose each component is regular.
	Now, assign $-1$ to any edge in $E(G)\setminus E(H)$ and $1$ to all edges of any $3$-regular component and $2$  to all edges of any
	$2$-regular component to obtain a $1$-sum $3$-flow.

	Now, let $r=2t+1\geq 7$.
	We have $1\leq t+1\leq \frac{2(2t+1)}{3}$. By Theorem \ref{kano}, $G$ has a $[t, t+1]$-factor whose each component is regular.
	Let $H$ be the union of all $t$-regular components  and $K$ be the union of  all $(t+1)$-regular components of $G$. First assume that $t$ is odd.
	Assign $1$ to all edges in $E(G)\setminus (E(H)\cup E(K))$. Also,
	Assign $-1$ to all edges of $H$. Since $t+1$ is even, by Petersen Theorem, $K$ has a $4$-regular factor say $L$.  Since $L$ is a union
	of two $2$-factors, it admits a $-2$-sum $3$-flow. Now, assign $-1$ to all edges of $E(K)\setminus E(L)$ to obtain a $1$-sum $3$-flow
	for $G$. Now, let $t$ be even. Assign $-1$ to all  edges in $E(G)\setminus (E(H)\cup E(K))$. Assign $1$ to all edges of $K$. Since $t$ is even,
	$H$ has a $2$-factor, say $L$. Assign $1$ to all edges in $E(H)\setminus E(L)$ and $2$ to all edges in $E(L)$ to obtain a $1$-sum
	$3$-flow for $G$.
	
	The last part is an immediate consequence of Theorem 8 of \cite{zar}.
\end{proof}

\begin{theorem} 
	Let $r\geq 3(r\neq 5)$ be an odd positive integer. Then every $2$-edge connected $r$-regular graph admits a  $0$-sum $3$-flow.
\end{theorem}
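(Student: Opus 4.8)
The final statement to prove is: \emph{Let $r\geq 3$ ($r\neq 5$) be an odd positive integer. Then every $2$-edge connected $r$-regular graph admits a $0$-sum $3$-flow.} Recall that a $0$-sum $3$-flow uses values from $\{\pm 1,\pm 2\}$ and requires every vertex sum to equal $0$.

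The plan is to mimic the case analysis used in the preceding theorem on $1$-sum $3$-flows, replacing the targeted vertex sum $1$ by $0$ and exploiting the extra structural strength coming from $2$-edge connectivity. First I would handle the small base case $r=3$ separately, since Theorem \ref{kano} requires $r\geq 3$ but the factor it produces is most useful for larger $r$; for $r=3$ a $2$-edge connected cubic graph is bridgeless, so by a classical theorem it has a perfect matching $M$, and I would assign $2$ to the edges of $M$ and $-1$ to the remaining edges. Each vertex then meets exactly one $M$-edge (contributing $+2$) and two non-matching edges (contributing $-2$), giving vertex sum $0$; this is a $0$-sum flow with values in $\{-1,2\}\subset\{\pm1,\pm2\}$.

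For odd $r=2t+1\geq 7$ I would again invoke Theorem \ref{kano} with $k=t+1$, which is admissible since $1\leq t+1\leq \tfrac{2(2t+1)}{3}$, to obtain a $[t,t+1]$-factor $H$ each of whose components is regular. Splitting $H$ into its $t$-regular part and its $(t+1)$-regular part, and noting that the complement $E(G)\setminus E(H)$ is itself regular of some even degree, I would assign constant weights on each piece so that the contributions at every vertex cancel. The parities of $t$ and $t+1$ determine which pieces admit a further $2$-factorization (via Petersen's theorem, since even-regular graphs decompose into $2$-factors), and on such $2$-factors one can locally realize any prescribed even sum using alternating $\pm$ or constant assignments. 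The key bookkeeping is that at each vertex the three regular pieces contribute degrees summing to $r$, and I must choose the per-piece weights so that the signed total is exactly $0$ rather than $1$; this is generally easier than the $1$-sum target since $0$ is symmetric under negation.

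The main obstacle will be the parity case where $t$ is even (so the $t$-regular part $H$ and the complement are even-regular while the $(t+1)$-regular part $K$ is odd-regular): here no piece is automatically matchable or evenly splittable in a way that forces an odd contribution to cancel, and I expect to need the hypothesis of $2$-edge connectivity to guarantee an appropriate factor or matching in the odd-regular component. Indeed, the statement restricts to $2$-edge connected graphs precisely so that one can route the ``odd parity'' through a structure that a bridge would obstruct; the final sentence of the previous theorem already signals that such parity issues are resolved by an external result, so I anticipate reducing the delicate even-$t$ case to the cited result (Theorem 8 of \cite{zar}), which presumably supplies a suitable $2$-factor or matching in the bridgeless setting, after which the constant-weight cancellation argument closes the proof.
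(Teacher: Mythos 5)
Your base case $r=3$ is correct and in fact coincides with the paper's first case (there $r=3t$ with $t=1$: a $1$-factor weighted $2$ and the complementary $2$-factor weighted $-1$). But your general case has a genuine gap, and it starts with a false structural claim: after extracting the $[t,t+1]$-factor from Theorem \ref{kano} and splitting it into the $t$-regular part $H$ and the $(t+1)$-regular part $K$, the complement $E(G)\setminus(E(H)\cup E(K))$ is \emph{not} regular --- a vertex lying in a $t$-regular component has complementary degree $t+1$, while a vertex in a $(t+1)$-regular component has complementary degree $t$, and these have opposite parities. Relatedly, each vertex meets only \emph{two} of your three pieces (it belongs to exactly one component of the $[t,t+1]$-factor), so the bookkeeping ``three regular pieces contribute degrees summing to $r$ at each vertex'' is wrong. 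Even granting the decomposition, constant per-piece weights cannot produce vertex sum $0$: at an $H$-vertex you would need $ta+(t+1)c=0$ with $a,c\in\{\pm1,\pm2\}$, which forces $t\mid c$ since $\gcd(t,t+1)=1$, impossible for $t\geq 3$. So the ``constant-weight cancellation'' step fails and the finer subdecompositions you only gesture at would have to carry the whole proof. Finally, deferring the hard parity case to Theorem 8 of \cite{zar} on the guess that it ``presumably supplies a suitable $2$-factor or matching'' is not an argument: as used at the end of the preceding theorem, that result gives a $1$-sum $2$-flow for $2$-edge connected odd-regular graphs, which does not address your missing step. You also never explain why $r=5$ must be excluded, which any correct proof has to account for.

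For comparison, the paper does not reuse the $[t,t+1]$-factor machinery at all; it argues by cases on $r$ modulo $3$. Writing $r=3t$, $3t+1$, or $3t+2$, it invokes Theorem 3.10, Part (v) of \cite{kano} (this is where $2$-edge connectivity enters) to obtain a single $t$-, $(t+1)$-, or $(t+2)$-factor, respectively; the complement is then $2t$-regular, hence by Petersen's theorem decomposes into $2$-factors, which are regrouped as a $(2t-4)$-factor together with $2$- or $4$-factors. Constant weights then do work because the pieces are genuinely spanning and regular: e.g.\ for $r=3t+2$, assigning $2$, $-1$, $-2$ to the $(t+2)$-factor, the $(2t-4)$-factor, and the $4$-factor gives $2(t+2)-(2t-4)-8=0$ at every vertex. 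The constraint $2t-4\geq 0$ with $t$ odd in the case $r=3t+2$ is exactly what rules out $r=5$ ($t=1$) and explains the hypothesis $r\neq 5$.
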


\begin{proof} 
	Let $G$ be an $r$-regular graph. We consider three cases:
	\begin{enumerate}
		\item $r=3t+0$. By Theorem 3.10, Part (v) of \cite{kano}, $G$ has a $t$-factor. Thus $E(G)$ can be decomposed into one $t$-factor and one $2t$-factor.
			Assign $2$ and $-1$ to each edge of  $t$-factor  and $2t$-factor, respectively to obtain a $0$-sum $3$-flow for $G$.

		\item $r=3t+1$. Since $r$ is odd, $t+1$ is odd. By Theorem 3.10, Part (v) of \cite{kano}, $G$ has a $(t+1)$-factor.
			By Petersen's Theorem $E(G)$ can be decomposed into one $(t+1)$-factor, one $(2t-4)$-factor and two $2$-factors $F_1$ and $F_2$. Now, assign 
			$2$, $-1$, $-2$ and 	$-1$ to each edge of $(t+1)$-factor, $(2t-4)$-factor,  $F_1$ and $F_2$, respectively, to obtain a $0$-sum $3$-flow for $G$.

		\item $r=3t+2$. Since $r$ is odd, $t+2$ is odd. By Theorem 3.10, Part (v) of \cite{kano}, $G$ has a $(t+2)$-factor. So by Petersen's Theorem $E(G)$ can 
		be decomposed into one $(t+2)$-factor, one $(2t-4)$-factor and one $4$-factor. Now, assign $2, -1$ and $-2$ to each edge of $(t+2)$-factor, $(2t-4)$-factor 
		and $4$-factor, respectively to obtain a $0$-sum $3$-flow for $G$.
	\end{enumerate}
\end{proof}

\begin{conjecture}
	Every $2$-edge connected $5$-regular graph admits a $0$-sum $3$-flow.
\end{conjecture}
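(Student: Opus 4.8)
\noindent\emph{A possible line of attack.}
First I would translate the statement into linear algebra. Since $\gamma(v)=\sum_{e\ni v}\omega(e)$ is computed with the \emph{unsigned} incidence matrix $A(G)$, a $0$-sum $3$-flow is exactly a vector $\omega\in\ker A(G)$ all of whose entries lie in $\{\pm1,\pm2\}$. Note that $5$-regularity forces $|E|=\tfrac52|V|$, so $n=|V|$ is even. The easy case is when $G$ has a perfect matching $M$: then $G-M$ is $4$-regular and, by Petersen's theorem, splits into two $2$-factors $F_1,F_2$; assigning the value $2$ on $M$, the value $-2$ on $F_1$, and the value $1$ on $F_2$ gives $2-4+2=0$ at every vertex, a nowhere-zero flow with values in $\{-2,1,2\}$. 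Thus the whole difficulty is concentrated in the $2$-edge-connected $5$-regular graphs that have \emph{no} perfect matching (such graphs do exist, since $2$-edge-connectivity alone does not force a $1$-factor in a $5$-regular graph).

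For the general case I would use the following combinatorial reformulation. Write $D\subseteq E$ for the edges carrying a value of absolute value $2$ and $S=E\setminus D$ for the edges carrying $\pm1$. A parity count at a vertex $v$ shows the vertex sum is even only if $5-\deg_D(v)$ is even, i.e. $\deg_D(v)$ is odd; moreover $\deg_D(v)=5$ is impossible, since an odd number of $\pm2$'s never sums to $0$. Hence $D$ must be a spanning subgraph with $\deg_D(v)\in\{1,3\}$ for all $v$ (equivalently $S$ is an even subgraph with $\deg_S(v)\in\{2,4\}$). Conversely, once such a $D$ is fixed one must pick a sign $\sigma(e)\in\{\pm1\}$ on each edge so that $\sum_{e\ni v,\,e\in S}\sigma(e)+2\sum_{e\ni v,\,e\in D}\sigma(e)=0$ at every $v$; the \emph{local} feasibility is clear, since $\deg_D(v)\le3$ is precisely what makes the quantity $-2$ times the signed $D$-sum attainable by the single edges at $v$.

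The plan would then proceed in two stages: (i) produce a spanning subgraph $D$ with all degrees in $\{1,3\}$ whose complement is an even $[2,4]$-subgraph, and (ii) solve the resulting global sign system. For (i) I would try to extract $D$ from a $[2,3]$-factor with regular components, furnished by Theorem~\ref{kano} with $r=5$, $k=3$, splitting its $2$-regular and $3$-regular pieces appropriately. For (ii) I would model the sign choice as a degree-constrained-subgraph (weighted $f$-factor) problem and attempt to verify its Tutte-type solvability condition, using the Gallai--Edmonds structure of the maximum matchings of $G$ to control the barriers that obstruct a perfect matching.

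The step I expect to be the genuine obstacle is the interaction of (i) and (ii): choosing $D$ so that the accompanying sign system is globally solvable. Locally everything is fine, but the sign on an edge is shared by its two endpoints, so the system is a global $f$-factor-type condition whose feasibility need not follow from purely local counting. This is exactly where the factor-and-constant-weight recipe of the previous theorems breaks down: for $r=5$ the decomposition $5=(t+2)+(2t-4)+4$ with $t=1$ demands a nonexistent ``$(-2)$-factor,'' and replacing this rigid decomposition by the flexible pair (an odd $\{1,3\}$-subgraph $D$, a sign system) is what must be made to work. I expect the graphs with no perfect matching to require the full Gallai--Edmonds analysis, and it is plausible that the conjecture is true only because the extra freedom in choosing $D$ and the signs compensates for the missing $1$-factor.
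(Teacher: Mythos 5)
The first thing to note is that the statement you are attacking is stated in the paper as a \emph{conjecture}: the theorem preceding it proves $0$-sum $3$-flows only for $2$-edge connected $r$-regular graphs with $r$ odd and $r\neq 5$, precisely because, as you correctly diagnose, the decomposition used in the case $r=3t+2$ degenerates at $t=1$ (the would-be $(2t-4)$-factor has negative degree). So there is no proof in the paper to compare against; the only question is whether your sketch settles the open case, and it does not --- you say so yourself. What you do establish is correct and worth keeping: if $G$ has a perfect matching $M$, then $G-M$ is $4$-regular, Petersen's theorem splits it into two $2$-factors $F_1,F_2$, and the weights $2,-2,1$ on $M,F_1,F_2$ give $2-4+2=0$ at every vertex; this is the same constant-weight-on-factors recipe the paper uses for the other values of $r$. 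Your parity reformulation is also sound: in any $0$-sum $3$-flow the set $D$ of edges of absolute value $2$ satisfies $\deg_D(v)$ odd and $\deg_D(v)\neq 5$, hence $\deg_D(v)\in\{1,3\}$ (the complement then automatically has even degrees $2$ or $4$, so you need not impose that separately).

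The genuine gap is your stage (ii), and it is compounded by a local slip: the claim that ``local feasibility is clear'' whenever $\deg_D(v)\le 3$ fails at vertices with $\deg_D(v)=3$, because the two remaining $\pm1$-edges can contribute only $0$ or $\pm2$, so the three $D$-edges at $v$ must \emph{not} all carry the same sign. The signing problem is therefore not a plain degree-constrained-subgraph problem but one with a nowhere-monochromatic-triple side condition, and no Tutte-type solvability criterion for it is verified or even written down; Gallai--Edmonds is invoked only as a hope. Stage (i) is also not delivered by the tool you cite: Kano's theorem with $r=5$, $k=3$ gives a $[2,3]$-factor with regular components, but neither that factor nor its complement has all degrees in $\{1,3\}$. (Stage (i) is in fact salvageable by a different route: by Amahashi's odd-factor theorem, $G$ has a $\{1,3\}$-factor if $o(G-S)\le 3|S|$ for all $S$, and in a $2$-edge connected $5$-regular graph every odd component of $G-S$ sends an odd number, hence at least $3$, of edges to $S$, giving $o(G-S)\le \frac{5}{3}|S|$. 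But this only sharpens where the difficulty sits: everything reduces to the global sign system, which remains unsolved.) In short, your proposal is a correct partial result (the $1$-factor case) plus a reasonable research program, but it does not prove the conjecture, and the point of failure is exactly the step you flag.
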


It is not hard to see if $e$ is a cut edge of a graph $G$, then in any $0$-sum $k$-flow of $G$, the value of $e$ should even. Now, let $r$ be an odd positive integer 
and $G$ be an $r$-regular graph containing a vertex $v$ such that all edges incident with $v$ is a cut edge.  Thus $G$ does not admit a $0$-sum $4$-flow. In 
\cite{akb} and \cite{akb3}, it was proved that every $r$-regular graph ($r\geq 3$) admits a $0$-sum $5$-flow.

\bibliographystyle{plain}

\end{document}